\documentclass[11pt, leqno]{amsart}

\usepackage[svgnames, dvipsnames, table]{xcolor}
\usepackage[colorlinks = true, allcolors = UCRceleste, pagebackref=true, colorlinks]{hyperref}

\usepackage{float}
\usepackage[utf8]{inputenc}
\usepackage{caption}
\usepackage{subcaption}
\usepackage{float}
\usepackage[ruled, linesnumbered, algosection]{algorithm2e}
\usepackage{mathrsfs}
\usepackage{verbatim}
\usepackage{xcolor}
\usepackage{color}
\usepackage{graphicx}
\usepackage{rotating}
\usepackage{diagbox}
\usepackage{amssymb}
\usepackage{epstopdf}
\usepackage{tikz}
\definecolor{UCRceleste}{RGB}{0,192,243}
\definecolor{mintgreen}{RGB}{152,255,152}
\definecolor{pinksalmon}{RGB}{255,102,102}
\definecolor{hueso}{RGB}{245,245,220}
\definecolor{marfil}{RGB}{255,253,208}
\definecolor{amarillo}{RGB}{255,255,0}
\usetikzlibrary{decorations.markings,arrows}

\usetikzlibrary{decorations.pathreplacing}
\usepackage[inner=1.0in,outer=1.0in,bottom=1.0in, top=1.0in]{geometry}

\usepackage{enumitem}

\usepackage{orcidlink}

\numberwithin{equation}{section}

\newtheorem{theorem}{Theorem}[section]

\newtheorem{lemma}[theorem]{Lemma}
\newtheorem{proposition}[theorem]{Proposition}

\theoremstyle{plain}
\newtheorem{definition}[theorem]{Definition}

\theoremstyle{remark}
\newtheorem{remark}[theorem]{Remark}

\makeatletter
\def\moverlay{\mathpalette\mov@rlay}
\def\mov@rlay#1#2{\leavevmode\vtop{%
   \baselineskip\z@skip \lineskiplimit-\maxdimen
   \ialign{\hfil$\m@th#1##$\hfil\cr#2\crcr}}}
\newcommand{\charfusion}[3][\mathord]{
    #1{\ifx#1\mathop\vphantom{#2}\fi
        \mathpalette\mov@rlay{#2\cr#3}
      }
    \ifx#1\mathop\expandafter\displaylimits\fi}
\makeatother

\newcommand{\suchthat}{\;\ifnum\currentgrouptype=16 \middle\fi|\;}

\newcommand{\Q}{\mathbb{Q}}

\newcommand{\op}[1]{\operatorname{#1}}

\usepackage[svgnames, dvipsnames, table]{xcolor}
\usepackage[colorlinks = true, allcolors = UCRceleste, pagebackref=true, colorlinks]{hyperref}

\usepackage[utf8]{inputenc}
\usepackage{caption}
\usepackage{subcaption}
\usepackage{float}
\usepackage{mathrsfs}
\usepackage{xcolor}
\usepackage{mathtools}
\usepackage{seqsplit}

\usepackage{color}
\usepackage{amssymb}
\usepackage{tikz}
\definecolor{UCRceleste}{RGB}{0,192,243}
\definecolor{mintgreen}{RGB}{152,255,152}
\definecolor{pinksalmon}{RGB}{255,102,102}
\definecolor{hueso}{RGB}{245,245,220}
\definecolor{marfil}{RGB}{255,253,208}
\definecolor{amarillo}{RGB}{255,255,0}

\usepackage{geometry}

\usepackage{enumitem}

\usepackage{orcidlink}

\usepackage[ruled, linesnumbered, algosection]{algorithm2e}

\numberwithin{equation}{section}

\newtheorem*{theorem*}{Theorem}

\newcommand{\val}{\operatorname{val}}

\newcommand{\legendre}[2]{\ensuremath{\left( \frac{#1}{#2} \right) }}
\newcommand{\tlegendre}[2]{\ensuremath{( \frac{#1}{#2} ) }}

\newcommand{\krop}{{\tlegendre{\cdot}{p}}}

\def\O_K{{\Cal{O}_{K}}}
\def\O_F{{\Cal{O}_{F}}}
\def\N_F{{\Cal{N}_{F/\Q}}}

\begin{document}
\raggedbottom

\title{Efficient Computation and Congruences for Colored Partition Functions
}

\author[J.C. Villegas-Morales]{Jean Carlos Villegas-Morales\orcidlink{0009-0004-8933-0627}}

\address{Department of Mathematics, 3368 TAMU, Texas A\&M University, College Station, TX 77843-3368, USA}
\email{jean\_villegas.02@tamu.edu}

\begin{abstract}
For a positive integer $\alpha$, let $p_\alpha(n)$ denote the number of $\alpha$-colored partitions of $n$. The Rademacher-type expansion of Iskander, Jain, and Talvola is valid for every real $\alpha>0$, but when $\alpha>24$ it involves several polar terms and a two-parameter family of exponential sums $A_k^{(\alpha)}(n,m)$. For integral $\alpha$, we prove multiplicativity and prime-power reduction formulas for these sums, expressing their local factors as classical or quadratically twisted Kloosterman sums. Together with explicit truncation and precision bounds, these formulas yield an efficient algorithm for computing $p_\alpha(n)$ exactly. Our SageMath implementation computes the 1,113,767-digit integer $p_{100}(10^{10})$ in less than one hour. As an application, we use the algorithm to certify new Ramanujan-type congruences.
\end{abstract}

\maketitle

\section{Introduction}

The ordinary partition function \(p(n)\) is one of the central objects in additive number theory. Ramanujan's celebrated congruences
\[
p(5n+4)\equiv0\pmod 5,\qquad
p(7n+5)\equiv0\pmod 7,\qquad
p(11n+6)\equiv0\pmod{11}
\]
initiated the systematic study of partition congruences \cite{MR1544457}. For a positive integer \(\alpha\), the \(\alpha\)-colored partition function \(p_\alpha(n)\) counts partitions in which each part may be assigned one of \(\alpha\) colors. Its generating function is
\[
\sum_{n\geq0}p_\alpha(n)q^n=\prod_{n\geq1}(1-q^n)^{-\alpha},
\]
and therefore
\[\eta(z)^{-\alpha}=q^{-\alpha/24}\sum_{n\geq0}p_\alpha(n)q^n,\qquad q=e^{2\pi i z}.
\]
Thus, colored partition numbers occur naturally as Fourier coefficients of negative-weight eta-quotients.

The classical partition function can be computed using the convergent Hardy--Ramanujan--Rademacher series. More generally, Sussman \cite[Theorem~1.1]{Sus17} obtained Rademacher-type expansions for a broad class of negative-weight eta-quotients. The associated exponential sums were analyzed in terms of twisted Kloosterman sums in \cite{AJNJ}, extending classical work of Lehmer \cite{Leh38} and Sali\'e \cite{Sal32}. For the eta-quotient \(\eta(z)^{-\alpha}\), however, condition \cite[(15)]{Sus17} restricts this approach to \(0<\alpha\leq24\).

Iskander, Jain, and Talvola \cite[Theorem~1.1]{IJT20} removed this restriction by proving a Hardy--Ramanujan--Rademacher-type expansion for \(p_\alpha(n)\) valid for every real \(\alpha>0\). Their formula involves exponential sums \(A_k^{(\alpha)}(n,m)\) arising from the polar terms of \(\eta(z)^{-\alpha}\). The sum previously studied in \cite{AJNJ} for the colored partition function is precisely \(A_k^{(\alpha)}(n,0)\). When \(\alpha>24\), additional terms with \(m>0\) occur, and both arguments of the corresponding Kloosterman sums vary. Consequently, the one-parameter formulas do not suffice to evaluate the full expansion efficiently.

For positive integral \(\alpha\), we establish the required arithmetic formulas for the full two-parameter family \(A_k^{(\alpha)}(n,m)\). We first prove a multiplicativity formula that reduces their evaluation to prime-power moduli (Theorem~\ref{thm:multiplicativity}). We then identify the prime-power factors with classical or quadratically twisted Kloosterman sums (Theorem~\ref{prop:Ak-prime-power}) and give the reductions needed for their explicit evaluation. These results constitute the principal arithmetic contribution of the paper and make the Iskander--Jain--Talvola expansion computationally effective when \(\alpha>24\).

Combining these formulas with explicit truncation estimates and an adaptation of Johansson's termwise precision strategy \cite{Joh12}, we obtain a rigorous procedure for computing individual values of \(p_\alpha(n)\) (Algorithm~\ref{alg:colored-partitions}). Each term is evaluated at a precision determined by an explicit bound for its magnitude, while the prime-power formulas detect many vanishing contributions before numerical evaluation. As a large example, our SageMath implementation computes \(p_{100}(10^{10})\), an integer with more than one million digits, in less than one hour.

Congruences for colored partition functions have been studied by Gandhi \cite{gandhi1963congruences}, Atkin \cite{Atkin1968}, Kiming and Olsson \cite{KimingOlsson}, Boylan \cite{Boylan}, Garvan \cite{MR1055413}, Chen, Du, Hou, and Sun \cite{ChenDuHouSun}, Dicks \cite{Dicks}, Tang \cite{MR3766907}, and Beckwith et al.\ \cite{MR4624922}, among others. Building on the existence results of Treneer \cite{Tre08}, Ryan, Scherr, Sirolli, and Treneer gave an effective finite criterion for certifying congruences of weakly holomorphic modular forms and eta-quotients \cite[Theorems~4.2 and~4.3]{RSST21}. We specialize their criterion to colored partition functions and combine it with our exact coefficient-computation procedure, obtaining the new explicit congruence families recorded in Theorem~\ref{thm:colored-congruences}.

The paper is organized as follows. Section~\ref{sect:HRR} recalls the Rademacher expansion. Sections~\ref{multisec} and ~\ref{pripowsec} establish the multiplicativity and prime-power reduction formulas for \(A_k^{(\alpha)}(n,m)\). Section~\ref{KloosEval} collects the known local Kloosterman-sum evaluations used to make these formulas explicit. Sections~\ref{sect:recursive}--\ref{sect:algorithms} present the recurrence relation, the coefficient and truncation bounds, and the exact computational procedure. Finally, Section~\ref{seccongru} applies this procedure to the
certification of colored-partition congruences.

\section*{Acknowledgments}

The author is deeply grateful to Nicol\'as Sirolli for valuable discussions and insightful comments on the mathematical and computational aspects of this work. The author also thanks Paul Dessauer for carefully reading the manuscript and suggesting improvements to its exposition.

ChatGPT was used for minor assistance with writing and code. All mathematical work and verification were carried out by the author, who takes full responsibility for the paper.

\section{The Rademacher expansion for colored partitions}
\label{sect:HRR}

We begin by fixing the notation for the Rademacher expansion used throughout the paper. Let \(z\in\mathbb{H}:=\{z\in\mathbb{C}:\operatorname{Im}(z)>0\}\). The Dedekind eta function is defined by
\begin{equation}
\eta(z)
=
q^{1/24}\prod_{n=1}^{\infty}(1-q^n).
\end{equation}

For every real \(\alpha>0\), define the coefficients \(p_\alpha(n)\) by
\begin{equation}
\sum_{n=0}^{\infty}p_\alpha(n)q^n
=
\prod_{n=1}^{\infty}\frac{1}{(1-q^n)^\alpha}.
\end{equation}
Equivalently,
\begin{equation}
\eta(z)^{-\alpha}
=
q^{-\alpha/24}
\sum_{n=0}^{\infty}p_\alpha(n)q^n.
\end{equation}
When \(\alpha\) is a positive integer, \(p_\alpha(n)\) counts the number of partitions of \(n\) in which each part is assigned one of \(\alpha\) possible colors.

To state the expansion, for real $\alpha>0$ and integers $n>\alpha/24$ and $0\leq m\leq\alpha/24$, set
\begin{equation}\label{eq:nu-mu}
s_\alpha
:=
\frac{\alpha}{2}+1,
\qquad
\nu_\alpha(n)
:=
\sqrt{n-\frac{\alpha}{24}},
\qquad
\mu_\alpha(m)
:=
\sqrt{\frac{\alpha}{24}-m},
\qquad\text{and}\qquad
M_\alpha
:=
\left\lfloor\frac{\alpha}{24}\right\rfloor.
\end{equation}

\begin{definition}
Let $\alpha>0$ be real, let $n,m\in\mathbb Z$, and let $k\geq1$ be an integer. The $\alpha$-Kloosterman sum is
\begin{equation}\label{eq:alpha-kloosterman}
A_k^{(\alpha)}(n,m)
:=
\sum_{\substack{0\leq h<k\\(h,k)=1}}
\exp\!\left(
\alpha\pi i\,s(h,k)
-
\frac{2\pi i}{k}(mh^{-1}+nh)
\right),
\end{equation}
where $h^{-1}$ denotes an inverse of $h$ modulo $k$, and
\begin{equation}
s(h,k)
:=\sum_{r=1}^{k-1}\frac{r}{k}\left(\frac{hr}{k}-\left\lfloor\frac{hr}{k}\right\rfloor-\frac12\right)
\end{equation}
is the Dedekind sum.
\end{definition}

We use the modified Bessel function of the first kind, defined by
\begin{equation}
I_\nu(x):=\left(\frac{x}{2}\right)^\nu\sum_{j=0}^{\infty}\frac{(x/2)^{2j}}{j!\,\Gamma(\nu+j+1)}.
\end{equation}

The following Rademacher-type expansion is due to Iskander, Jain, and Talvola \cite[Theorem~1.1]{IJT20}.

\begin{theorem}\label{thm:exact-formula}
For every real \(\alpha>0\) and every integer \(n>\alpha/24\), we have
\begin{equation}\label{eq:exact-formula}
p_\alpha(n)
=
\nu_\alpha(n)^{-s_\alpha}
\sum_{m=0}^{M_\alpha}
\mu_\alpha(m)^{s_\alpha}p_\alpha(m)
\sum_{k=1}^{\infty}
\frac{2\pi}{k}
A_k^{(\alpha)}(n,m)
I_{s_\alpha}\!\left(
\frac{4\pi}{k}
\nu_\alpha(n)\mu_\alpha(m)
\right).
\end{equation}
\end{theorem}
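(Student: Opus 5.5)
We prove the expansion by the classical Hardy--Ramanujan--Rademacher circle method, applied to the weight $-\alpha/2$ form $F(z)=\eta(z)^{-\alpha}$. Write $F(\tau)=\sum_{n\ge0}p_\alpha(n)x^n$ with $x=e^{2\pi i\tau}$. By Cauchy's formula,
\[
p_\alpha(n)=\int_{\tau_0}^{\tau_0+1}F(\tau)\,e^{-2\pi i n\tau}\,d\tau ,
\]
taken along the path $\operatorname{Im}\tau=1/N^2$. We dissect this path into Ford circles (equivalently, Farey arcs) of order $N$. Near a rational $h/k$ with $(h,k)=1$, put $\tau=(h+iz)/k$ with $\operatorname{Re}z>0$.

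The first step is to invoke the Dedekind eta transformation law, raised to the real power $-\alpha$ using the principal branch. With $h h'\equiv-1\pmod k$, it reads
\[
\prod_{n\ge1}(1-x^n)^{-\alpha}
=e^{\pi i\alpha s(h,k)}\,z^{\alpha/2}\exp\!\Big(\frac{\pi\alpha}{12k}\big(z^{-1}-z\big)\Big)\prod_{n\ge1}(1-x'^n)^{-\alpha},
\]
where $x'=e^{2\pi i(h'+i/z)/k}$. Expanding the last product as $\sum_m p_\alpha(m)x'^m$ gives the factor
\[
\exp\!\Big(\frac{2\pi}{k z}\Big(\frac{\alpha}{24}-m\Big)\Big)e^{2\pi i h' m/k}.
\]
We split the series at $m\le M_\alpha$. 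The terms with $m\le M_\alpha$ grow as $z\to0$ and are the polar terms; those with $m>M_\alpha$ are bounded. Collecting the root-of-unity factors from the transformation and from $e^{-2\pi i n\tau}$ produces
\[
\exp\!\big(\alpha\pi i\,s(h,k)-\tfrac{2\pi i}{k}(nh+mh^{-1})\big),
\]
after replacing $h'$ by $-h^{-1}$ modulo $k$. Summing over $h$ then yields $A_k^{(\alpha)}(n,m)$.

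The second step is the error analysis as $N\to\infty$. On each Farey arc one has $\operatorname{Re}(1/z)\ge1$ and $|z|\ll k/N$, and the arc length is $\asymp 1/(kN)$. Hence the nonpolar part contributes
\[
\ll \sum_{k\le N}\phi(k)\,(k/N)^{\alpha/2}\cdot\frac{1}{kN}\cdot O(1).
\]
We need this to tend to $0$. This is not automatic for every $\alpha>0$ with only the trivial bound on the sum over $h$ (the exponent works out to roughly $N^{-\alpha/2}\cdot N^{\alpha/2+1}/N$, i.e.\ $O(1)$ at worst). So we use the standard trick of grouping the arcs by their endpoints, which brings in incomplete Kloosterman sums. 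A Weil-type bound of $O(k^{1/2+\epsilon})$ for these sums recovers a power saving. The polar terms also give errors coming from completing the arcs to full circles; these are controlled by the same incomplete-sum estimates.

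This uniform treatment of the error for all real $\alpha>0$, including the incomplete sums carrying the Dedekind-sum multiplier, is the main obstacle. Since Theorem~\ref{thm:exact-formula} is precisely \cite[Theorem~1.1]{IJT20}, we would follow their treatment here rather than redo it.

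The final step completes each polar integral to the full vertical line $\operatorname{Re}(1/z)=1$, substituting $w=1/z$. This gives the standard integral
\[
\frac{1}{2\pi i}\int_{c-i\infty}^{c+i\infty}w^{-s-1}e^{aw+b/w}\,dw=(a/b)^{s/2}I_s\big(2\sqrt{ab}\big),
\]
with $s=s_\alpha$, $a=\frac{2\pi}{k}\big(\frac{\alpha}{24}-m\big)$ and $b=\frac{2\pi}{k}\big(n-\frac{\alpha}{24}\big)$. This produces $(\mu_\alpha(m)/\nu_\alpha(n))^{s_\alpha}$ and the Bessel argument $\frac{4\pi}{k}\nu_\alpha(n)\mu_\alpha(m)$. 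Tracking the constants gives the factor $2\pi/k$. Letting $N\to\infty$ then yields \eqref{eq:exact-formula}, and the series converges absolutely because $I_s(x)\ll x^{s}$ as $x\to0$.
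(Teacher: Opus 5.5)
The paper gives no proof of this statement; it is quoted from \cite[Theorem~1.1]{IJT20}. Your decision to defer to that reference therefore matches the paper, and your derivation of the main terms is correct: the transformation law for $\eta^{-\alpha}$, the collection of roots of unity into $A_k^{(\alpha)}(n,m)$ via $h'\equiv -h^{-1}$, and the Bessel integral with $a=\frac{2\pi}{k}\mu_\alpha(m)^2$, $b=\frac{2\pi}{k}\nu_\alpha(n)^2$.

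Your error analysis, however, contains a real mistake. The conclusion that the trivial bound only gives $O(1)$ comes from mixing normalizations. The bounds $\operatorname{Re}(1/z)\ge 1$ and $|z|\ll k/N$ belong to Apostol's substitution $\tau=h/k+iz/k^2$, in which the transformation factor is $(z/k)^{\alpha/2}$. With your substitution $\tau=(h+iz)/k$, the Ford circle becomes $|z-\tfrac{1}{2k}|=\tfrac{1}{2k}$. After replacing each arc by its chord, one has $|z|<\sqrt2/N$ and $\operatorname{Re}(1/z)\ge k$ on the chord. This has three consequences:
\begin{enumerate}
\item $|z^{\alpha/2}|\ll N^{-\alpha/2}$, not $(k/N)^{\alpha/2}$.
\item The non-polar tail $\sum_{m>\alpha/24}p_\alpha(m)e^{-2\pi(m-\alpha/24)\operatorname{Re}(1/z)/k}$ is bounded uniformly in $k$. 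With only $\operatorname{Re}(1/z)\ge 1$, it would not be.
\item The $\tau$-length of the chord is $\ll 1/(kN)$.
\end{enumerate}
Summing over $h$ and $k\le N$ gives a total non-polar error $\ll N^{-\alpha/2}\sum_{k\le N}\phi(k)/(kN)\ll N^{-\alpha/2}$. The same bound holds for the two small arcs added when each polar integral is completed to the full circle. So the negative weight $-\alpha/2$ alone makes Rademacher's argument work for every real $\alpha>0$, with no cancellation needed in the sum over $h$. Absolute convergence of the final series follows from $|A_k^{(\alpha)}(n,m)|\le k$ and $s_\alpha>1$. The ``main obstacle'' you describe does not exist.

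The remedy you propose would not be available in any case. For non-integral $\alpha$, the multiplier $e^{\pi i\alpha s(h,k)}$ is not a Dirichlet character times an additive character in $h$ and $h^{-1}$; for irrational $\alpha$ its values are not even roots of unity. So Weil's bound does not apply to the incomplete sums you invoke. The paper itself notes that its identification with twisted Kloosterman sums requires $\alpha\in\mathbb{Z}$. You should replace that paragraph with the trivial estimate above.

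A minor point: if $24\mid\alpha$, the term $m=M_\alpha=\alpha/24$ is constant rather than polar and belongs with the non-polar terms. This is consistent with its zero contribution to the formula, since $\mu_\alpha(M_\alpha)=0$.
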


Theorem~\ref{thm:exact-formula} is valid for every real \(\alpha>0\). Beginning with the next section, however, we assume that \(\alpha\) is a positive integer, as required by the arithmetic and character-theoretic properties of the sums \(A_k^{(\alpha)}(n,m)\).

\section{Multiplicativity}\label{multisec}
We now begin the arithmetic analysis of \(A_k^{(\alpha)}(n,m)\). The first step is to factor a sum with composite modulus into a product of sums with relatively prime moduli. The additional parameter \(m\), which multiplies \(h^{-1}\) in the exponential, must be carried through this factorization. Our starting point is the following congruence for Dedekind sums, due to Rademacher and Whiteman \cite[Theorem~20]{RW41}.
\begin{proposition}\label{Rademacher-Whiteman}
Let $a, b, c$ be pairwise coprime positive integers. Then
\begin{align*}
\left( s(ab, c) - \frac{ab}{12c} \right) + \left( s(bc, a) - \frac{bc}{12a} \right) - \left( s(b, ac) - \frac{b}{12ac} \right)+\frac{abc}{12} \in 2\mathbb{Z}.
\end{align*}
\end{proposition}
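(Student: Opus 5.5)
The plan is to deduce the congruence from the transformation law of $\eta$. The point is that $e^{\pi i\,s(d,c)}$ is, up to an explicit factor, the eta multiplier. The multiplier is consistent along any factorization of a matrix in $\mathrm{SL}_2(\mathbb Z)$, provided the square-root branches are tracked. The statement is equivalent to
\[
\exp\Bigl(\pi i\bigl[(s(ab,c)-\tfrac{ab}{12c})+(s(bc,a)-\tfrac{bc}{12a})-(s(b,ac)-\tfrac{b}{12ac})+\tfrac{abc}{12}\bigr]\Bigr)=1 .
\]
This is a 24th-root-of-unity identity, and it must hold exactly rather than up to sign. That is why a multiplier argument is natural, but also why branch bookkeeping will be the crux.

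First, recall the classical formula valid for $c>0$:
\[
\eta\Bigl(\tfrac{az+b}{cz+d}\Bigr)=\varepsilon(a,b,c,d)\,\sqrt{-i(cz+d)}\,\eta(z),
\qquad
\varepsilon=\exp\Bigl(\pi i\bigl(\tfrac{a+d}{12c}-s(d,c)\bigr)\Bigr),
\]
with the principal branch of the square root. Next, use $a,b,c$ pairwise coprime to choose integers realizing a factorization $\gamma_3=\gamma_1\gamma_2$ in $\mathrm{SL}_2(\mathbb Z)$. The lower-left entries and the diagonal entries should be arranged so that the three Dedekind sums that appear are $s(ab,c)$, $s(bc,a)$ and $s(b,ac)$. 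The natural choice gives $\gamma_1$ lower row $(c,\,ab)$, $\gamma_2$ lower row $(a,\,bc)$ up to reordering, and $\gamma_3$ lower-left entry $ac$ with lower-right entry $\equiv b \pmod{ac}$. By the Chinese remainder theorem the upper entries can be chosen so that each determinant is $1$ and the product relation holds. Since $s(d,c)$ depends only on $d\bmod c$, replacing the entries by congruent ones is harmless.

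Writing $\eta(\gamma_3 z)=\eta(\gamma_1(\gamma_2 z))$ in two ways then gives
\[
\varepsilon(\gamma_3)\sqrt{-i(c_3z+d_3)}=\varepsilon(\gamma_1)\varepsilon(\gamma_2)\sqrt{-i(c_1\gamma_2z+d_1)}\sqrt{-i(c_2z+d_2)} .
\]
The product of square roots equals $\pm\sqrt{-i(c_3z+d_3)}$ up to the factor coming from $\sqrt{-i}\sqrt{-i}$ versus $\sqrt{-i}$. The sign is determined by comparing arguments at a convenient point, say $z=it$ with $t\to\infty$ or $t\to0$, where all the $\arg$'s lie in $(-\pi/2,\pi/2)$.

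Taking logarithms and dividing by $\pi i$ gives a relation among the three Dedekind sums modulo $2\mathbb Z$. The remaining terms are the $\frac{a+d}{12c}$ terms, which after substitution combine to the $\frac{ab}{12c},\frac{bc}{12a},\frac{b}{12ac}$ terms plus the constant $\frac{abc}{12}$. The reductions use reciprocity identities such as $\frac{a_i}{c_i}+\frac{d_i}{c_i}\equiv\cdots$, obtained from $a_id_i-b_ic_i=1$.

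The main obstacle is this last step: showing that all the extra rational terms, together with the branch sign (an extra $\pm1$, i.e.\ $\pm\pi i$), collapse exactly to $\frac{abc}{12}$ modulo $2$. Dropping them would only give the result modulo $\mathbb Z$.

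If the matrix bookkeeping becomes unwieldy, the fallback is to prove the congruence prime by prime using only earlier-style classical facts. Multiplying by $12abc$ and using $6k\,s(h,k)\in\mathbb Z$, it suffices to verify the resulting integer congruence modulo each odd prime power dividing $abc$, modulo $3$, and modulo $8$. The odd primes dividing $abc$ are handled by $12k\,s(h,k)\equiv h+h^{-1}\pmod k$. The modulus $3$ is handled by Rademacher's congruence $12k\,s(h,k)\equiv 0\pmod 3$ when $3\nmid k$. The modulus $8$ is handled by $12k\,s(h,k)\equiv k+1-2\bigl(\tfrac hk\bigr)\pmod 8$ for odd $k$, together with its even-$k$ analogue. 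At $8$, quadratic reciprocity for the Jacobi symbols $\bigl(\tfrac{ab}{c}\bigr),\bigl(\tfrac{bc}{a}\bigr),\bigl(\tfrac{b}{ac}\bigr)$ is what makes the signs cancel.
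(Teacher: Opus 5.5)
Your main route fails at the factorization step, because the required matrices do not exist. Suppose $\gamma_1\gamma_2=\gamma_3$ in $\mathrm{SL}_2(\mathbb Z)$, where $\gamma_i$ has upper-left entry $x_i$ and lower row $(z_i,w_i)$. Then $z_3=z_1x_2+w_1z_2$. Since $\gcd(z_1,w_1)=\gcd(z_2,x_2)=1$, this gives $\gcd(z_1,z_3)=\gcd(z_1,z_2)=\gcd(z_2,z_3)$. If the lower-left entries are $\pm c$, $\pm a$, $\pm ac$ in any arrangement, these three gcds are $1$, $a$, $c$. That forces $a=c=1$, which is the trivial case. Concretely, $z_1=c$, $z_2=a$, $z_3=ac$ would need $c\mid w_1a$, hence $c\mid w_1$, which is impossible. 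So no Chinese-remainder choice of upper entries yields the product relation. A single consistency relation for the eta multiplier therefore never involves $s(ab,c)$, $s(bc,a)$ and $s(b,ac)$ together, and the branch and $\frac{a+d}{12c}$ bookkeeping you call the crux never arises. The proposition is a change-of-modulus statement compatible with the Chinese remainder theorem, not a cocycle identity. The paper gives no proof of it; it cites \cite[Theorem~20]{RW41}.

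Your fallback is the viable route, but the reduction is misstated. Write $S_k:=12k\,s(\cdot,k)$ with the first argument taken from the statement. Multiplying by $12abc$, the claim becomes
\[
N:=ab\,S_c+bc\,S_a-b\,S_{ac}+b^2(a^2-1)(c^2-1)\equiv0\pmod{24abc}.
\]
The $2$- and $3$-primary parts of $24abc$ are $2^{3+\val_2(abc)}$ and $3^{1+\val_3(abc)}$. So checking modulo $8$ and modulo $3$ separately from the prime powers dividing $abc$ does not suffice.

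With the congruences you list, the argument does close when $\gcd(ac,6)=1$:
\begin{enumerate}
\item Factors of $2$ or $3$ in $b$ are absorbed by the factor $b$ in $N$.
\item Modulo $p^e\| ac$ with $p\geq5$, the terms cancel as $(a^2b^2+1)-(b^2+1)-b^2(a^2-1)$, or its analogue modulo the part of $a$.
\item Modulo $3$, every term vanishes.
\item Modulo $8$, put $x=\legendre{ab}{c}$, $y=\legendre{bc}{a}$, $\epsilon=\legendre{b}{ac}$, so that $xy=(-1)^{(a-1)(c-1)/4}\epsilon$. The claim reduces to $ax+cy-\epsilon\equiv\frac{(a+1)(c+1)}{2}-1\pmod 4$, which holds case by case.
\end{enumerate}

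When $3\mid ac$, the congruence $12k\,s(h,k)\equiv0\pmod 3$ is unavailable for the sums with $3\mid k$. There you need $12hk\,s(h,k)\equiv h^2+1\pmod{3k}$. When $2\mid ac$, you need a congruence for $12k\,s(h,k)$ modulo $2^{\lambda+3}$ for even $k=2^\lambda k_1$, which is the information encoded in Lehmer's formula \eqref{eq:lehmer-omega} for even $k$. You also need the Jacobi-symbol bookkeeping that goes with it; a mod-$8$ statement is not enough. These refined congruences at $2$ and $3$ are the missing ingredient.
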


Using the preceding congruence, we obtain the following multiplicativity formula for the $\alpha$-Kloosterman sums.

\begin{theorem}\label{thm:multiplicativity}Let $k_1,k_2 \geq 1$ be relatively prime integers, and let $k= k_1 k_2$. Let $\theta_1,\theta_2$ be positive integers such that $\theta_1 \theta_2 = 24$ and $\gcd(\theta_1 k_1, \theta_2 k_2) = 1$. Then there exist $n_1,n_2\in\mathbb{Z}_{\geq 1}$ which are solutions to
\begin{equation}
    \label{eqn:mult_n1n2}
\begin{cases}(24n+\alpha u(k_1,k_2))\equiv 24n_1k_2^2\pmod{\theta_1k_1},\\
 (24n+\alpha u(k_1,k_2))\equiv 24n_2k_1^2\pmod{\theta_2k_2},\end{cases}
\end{equation}
where $u(k_1,k_2) = k_1^2 +k_2^2-k^2-1$. Moreover, they satisfy $A_k^{(\alpha)}(n,m)=A_{k_1}^{(\alpha)}(n_1,m_1)A_{k_2}^{(\alpha)}(n_2,m_2)$ where $m_1,m_2$ are integers satisfying $m \equiv m_1 \pmod{k_1}$ and $ m\equiv m_2 \pmod{k_2}$.
\end{theorem}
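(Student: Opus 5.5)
We follow the classical Lehmer–Rademacher argument for multiplicativity of the partition Kloosterman sum $A_k(n)$, carrying along the extra parameter $m$ and the factor $\alpha$.

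\emph{Existence of $n_1,n_2$.} Since $\gcd(\theta_1k_1,\theta_2k_2)=1$, the moduli are coprime and each system can be treated separately. For the first congruence we need $24n+\alpha u(k_1,k_2)\equiv 24 n_1k_2^2 \pmod{\theta_1 k_1}$. Because $\theta_1\theta_2=24$ and $\gcd(\theta_1,\theta_2k_2)=1$, we have $24=\theta_1\theta_2$ with $\theta_2$ invertible modulo $\theta_1k_1$. Hence solvability reduces to checking that $24n+\alpha u$ is divisible by $\theta_1$. Writing $u=k_1^2+k_2^2-k_1^2k_2^2-1=-(k_1^2-1)(k_2^2-1)$, every prime $p\mid\theta_1$ is coprime to $k_1$ and $k_2$ (as $p\nmid\theta_2k_2$ and $p\nmid\theta_1k_1$ fails only if $p\mid k_1$, which is excluded). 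Then $(k_1^2-1)(k_2^2-1)$ is divisible by $24$, hence by $\theta_1$: squares of units are $1$ mod $3$ and mod $8$, and the product of two such factors gives divisibility by $9$ and $64$ as needed. After dividing by $\theta_1$, the remaining congruence modulo $k_1$ is linear in $n_1$ with invertible coefficient $\theta_2k_2^2$. Adding a multiple of $\theta_1k_1$ makes $n_1\ge 1$. The same argument gives $n_2$.

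\emph{Factorization.} Parametrize $h \bmod k$ coprime to $k$ by CRT as $h=h_1k_2\theta_2\bar{\ }+\dots$. Concretely, write $h\equiv h_1 \pmod{k_1}$ and $h\equiv h_2\pmod{k_2}$ with a convenient normalization, e.g.\ $h = h_2 k_1 \theta_1 x + h_1 k_2\theta_2 y$ for fixed inverses. Then split:
\begin{itemize}
\item[(i)] The linear exponentials $-\frac{2\pi i}{k}(mh^{-1}+nh)$ split via $\frac{1}{k}=\frac{a_1}{k_1}+\frac{a_2}{k_2}$-type partial fractions. Here $mh^{-1}$ contributes $m\bar h_1$-terms modulo $k_1$ and $m\bar h_2$-terms modulo $k_2$; this is why only $m\equiv m_i\pmod{k_i}$ matters, but some factor like $\overline{k_2}^2$ may appear and must be absorbed by reparametrizing $h_1$.
\item[(ii)] The Dedekind-sum term $\alpha\pi i\, s(h,k)$ is handled by Proposition~\ref{Rademacher-Whiteman}. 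Apply it with $a=k_1$, $c=k_2$ and suitable $b$ built from $h$ (after replacing $h$ by a congruent value coprime to both, using $s(h,k)$ depending only on $h \bmod k$). Using $s(h',c)=s(h,c)$ for $h'\equiv h$ and the reciprocity-type identity encoded there, one gets $s(h,k)\equiv s(h_1^*,k_1)+s(h_2^*,k_2)+(\text{explicit rational linear terms in } h,\bar h)\pmod 2$. Since $\alpha$ is an integer, multiplying by $\alpha\pi i$ kills the $2\mathbb Z$ ambiguity.
\end{itemize}

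\emph{Main obstacle.} The main obstacle is bookkeeping the rational correction terms $\frac{ab}{12c}$ and related quantities, multiplied by $\alpha$, together with the $nh$ terms. One must show they recombine exactly into $-\frac{2\pi i}{k_j}(n_jh_j+m_j\bar h_j)$ (mod $2\pi i$) with the $n_j$ of \eqref{eqn:mult_n1n2}. This is where the denominators $12k_j$ force working modulo $\theta_jk_j$ and produce $u(k_1,k_2)$. I would first verify the whole computation for $\alpha=1$, $m=0$, where it must reproduce Lehmer's known formula, and then observe that everything is linear in $\alpha$ and $m$. Finally, bijectivity of $(h_1,h_2)\mapsto h$ gives the product of the two sums.
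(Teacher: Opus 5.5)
There is a genuine gap. Your outline (CRT, Proposition~\ref{Rademacher-Whiteman}, integrality of $\alpha$) is the same as the paper's. But the factorization, which is the real content of the theorem, is labelled the ``main obstacle'' and never carried out. Three concrete steps are missing.

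(1) Rewrite $A_{k_1}^{(\alpha)}(n_1,m_1)$ via the permutation $h_1\mapsto k_2h_1$ of $(\mathbb Z/k_1\mathbb Z)^\times$, and symmetrically for $k_2$. After CRT the product is then one sum over $h \bmod k$, with phases (in units of $\pi i$) $\alpha s(k_2h,k_1)-2(m_1\bar k_2\bar h+n_1k_2h)/k_1$ plus the analogous $k_2$-term. Here bars denote inverses modulo $k_1$, $k_2$, or $k$ as appropriate. This is the substitution dictated by the Rademacher--Whiteman identity, and with it no factor such as $\bar k_2^{\,2}$ appears on the $m$-side. The $\bar h$-terms match modulo $2$ because $m\equiv m_1k_2\bar k_2+m_2k_1\bar k_1\pmod k$, so nothing needs to be ``absorbed'' by reparametrizing.

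(2) Proposition~\ref{Rademacher-Whiteman} with $(a,b,c)=(k_2,h,k_1)$ gives $s(k_2h,k_1)+s(k_1h,k_2)-s(h,k)\equiv h\,u(k_1,k_2)/(12k)\pmod 2$. The correction is linear in $h$ alone, with no $\bar h$-term. This is precisely why $m_i\equiv m$ needs no modification, a point your ``linear terms in $h,\bar h$'' leaves open.

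(3) After multiplying by $\alpha$, the total phase discrepancy is congruent modulo $2$ to $\frac{h}{12k}\bigl(24n+\alpha u-24n_1k_2^2-24n_2k_1^2\bigr)$. This lies in $2\mathbb Z$ because $24k=(\theta_1k_1)(\theta_2k_2)$ with coprime factors, $\theta_1k_1\mid 24k_1^2$ and $\theta_2k_2\mid 24k_2^2$, so the two congruences in \eqref{eqn:mult_n1n2} give divisibility by $24k$. This is the only place where those congruences and the hypothesis $\gcd(\theta_1k_1,\theta_2k_2)=1$ enter.

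Your fallback of checking $\alpha=1$, $m=0$ against Lehmer and then ``using linearity'' does not close the gap. The sums are not linear in $\alpha$; only the phases are. Extracting the required termwise phase congruence from Lehmer, and translating his case distinctions into the $\theta_1,\theta_2$ formulation, amounts to doing steps (2)--(3) anyway.

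Your existence argument for $n_1,n_2$ reaches the right conclusion, but the justification is wrong. A prime dividing $\theta_1$ need not be coprime to $k_1$ (e.g.\ $\theta_1=24$, $\theta_2=1$, $k_1=2$, $k_2=5$). Also, $(k_1^2-1)(k_2^2-1)$ need not be divisible by $9$ or $64$: with $k_1=3$, $k_2=2$, $\theta_1=3$, $\theta_2=8$ it equals $24$. All that is needed is $\theta_1\mid u(k_1,k_2)$. This follows from $\gcd(\theta_1,k_2)=1$, which gives $\theta_1\mid k_2^2-1$. Alternatively, as in the paper, use that $24\mid u(k_1,k_2)$ whenever $\gcd(k_1,k_2)=1$.
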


\begin{proof}
The proof adapts the factorization argument of \cite[Theorem 5.1]{AJNJ}. If \(k_1=1\) or \(k_2=1\), the result follows immediately from \(A_1^{(\alpha)}(n,m)=1\). Hence we may assume that \(k_1,k_2>1\). To show that there exist $n_1,n_2\in\mathbb{Z}_{\geq 1}$ solving the linear system of congruences in the statement it suffices to see that both
$\gcd\left(24k_2^2,\theta_1k_1\right)$ 
and
$\gcd\left(24k_1^2,\theta_2k_2\right)$ 
divide
\(24n+\alpha u(k_1,k_2)\).

This follows immediately from the fact that they both divide $24$, and the fact that if $s,t\in\mathbb{Z}$ are relatively prime then $24 \mid s^2+t^2-s^2t^2-1$.

Since \(\gcd(k_1,k_2)=1\), multiplication by \(k_j\) permutes \((\mathbb Z/k_i\mathbb Z)^\times\) whenever \(i\neq j\). Therefore we can write
\begin{equation*}
	A_{k_i}^{(\alpha)}\left(n_i,m_i\right)  = 
	\sum_{\substack{0\leq h_i<k_i \\(h_i, k_i)=1}} \exp
\left[\pi I\left(\alpha s\left(k_jh_i,k_i\right)-\frac{2(m_ik_j^{-1}h_i^{-1}+n_ik_jh_i)}{k_i}\right)\right], \quad i \neq j,
\end{equation*}
where \(I\) temporarily denotes the imaginary unit, and \(h_i^{-1}\) and \(k_j^{-1}\) denote the inverses of \(h_i\) and \(k_j\) modulo \(k_i\), respectively. Then by the Chinese Remainder Theorem we get that

\begin{multline*}
A_{k_1}^{(\alpha)}(n_1,m_1) A_{k_2}^{(\alpha)}(n_2,m_2) = \sum_{\substack{0\leq h<k \\(h, k)=1}}\exp \left[\pi I\left(-\frac{2(m_1k_2^{-1}h^{-1}+n_1k_2h)}{k_1}-\right.\right.\\\left.\left.
\frac{2(m_2k_1^{-1}h^{-1}+n_2k_1h)}{k_2}+\alpha s(k_2h,k_1)+\alpha s(k_1h,k_2)\right)\right].
\end{multline*}

It therefore suffices to show that

\begin{multline}\label{sufficientcondition}
-\frac{2(m_1k_2^{-1}h^{-1}+n_1k_2h)}{k_1}
-\frac{2(m_2k_1^{-1}h^{-1}+n_2k_1h)}{k_2}+
\\
\frac{2(mh^{-1}+nh)}{k_1 k_2}
+\alpha s(k_2h,k_1)+\alpha s(k_1h,k_2)-\alpha s(h,k_1k_2)
\in 2\mathbb{Z}
\end{multline}
for every $h$ prime to $k$, because this yields $A_k^{(\alpha)}(n,m)=A_{k_1}^{(\alpha)}(n_1,m_1)A_{k_2}^{(\alpha)}(n_2,m_2)$.
Fix $h$ as above. First, we write $v = k_1^{-1}$ and $u = k_2^{-1}$. Then we already have
\[
-\frac{2m_1k_2^{-1}h^{-1}}{k_1}-
\frac{2m_2k_1^{-1}h^{-1}}{k_2}+
\frac{2mh^{-1}}{k_1 k_2} = \frac{2h^{-1}}{k_1k_2}\left(-m_1k_2u-m_2k_1v+m\right)\in 2\mathbb{Z},
\]
since by hypothesis and by the Chinese Remainder Theorem we have $m \equiv m_1k_2u+m_2k_1v \pmod{k_1k_2}$.
Second, note that
\[
    \gcd\left(k_1,k_2\right) = 
    \gcd\left(h,k_1\right) =
    \gcd\left(h,k_2\right)
    = 1.
    \]
We can therefore apply Proposition~\ref{Rademacher-Whiteman} with $a:=k_2,b:= h, c:=k_1$ to get that

\begin{align*}
\left( s(hk_2, k_1) - \frac{k_2h}{12k_1} \right) + \left( s(hk_1, k_2) - \frac{hk_1}{12k_2} \right) - \left( s(h, k_1k_2) - \frac{h}{12k_1k_2} \right)+\frac{k_1k_2h}{12} \in 2\mathbb{Z}.
\end{align*}
This gives that
\begin{equation}
    \label{eqn:true_hyp}
    -\frac{h \, u(k_1,k_2)}{12 k_1 k_2}
    +s(k_2h,k_1)+s(k_1h,k_2)-s(h,k_1k_2)
    \in 2\mathbb{Z}.
\end{equation}
After multiplying by $\alpha\in\mathbb{Z}$, it follows that \eqref{sufficientcondition} holds if and only if
\begin{align*}\frac{h}{12k_1k_2}\left(24n_1k_2^2+24n_2k_1^2-(24n+\alpha u(k_1,k_2))\right)\in 2\mathbb{Z}.\end{align*}
This condition can be made independent of $h$ by showing that\[24k_1k_2 = (\theta_1 k_1)(\theta_2 k_2)\mid
24n_1k_2^2+24n_2k_1^2-(24n+\alpha u(k_1,k_2)).\]

Since $\theta_jk_j\mid 24k_j^2$ for each $j\in\left\{1,2\right\}$, the above becomes equivalent to
\begin{align*}
	\theta_1k_1 & \mid 24n_1k_2^2-(24n+\alpha u(k_1,k_2))\\
	\theta_2k_2 & \mid 24n_2k_1^2-(24n+\alpha u(k_1,k_2)),
\end{align*}
which are true by definition of $n_1,n_2$.\end{proof}
Iterating Theorem~\ref{thm:multiplicativity} reduces the computation to prime-power moduli. We determine those local factors in the next section.

\section{Prime powers}\label{pripowsec}
As shown in the previous section, the multiplicativity formula reduces the computation of $A_k^{(\alpha)}(n,m)$ to prime-power moduli. We now express the resulting factors in terms of classical or quadratically twisted Kloosterman sums.
\begin{definition}\label{def:twisted-kloosterman}
Let $k\geq1$, let $a,b\in\mathbb Z$, and let $\chi$ be a Dirichlet character modulo $k$. The twisted Kloosterman sum associated with $\chi$ is
\[
S_\chi(a,b;k):=\sum_{\substack{0\leq h<k\\(h,k)=1}}\chi(h)\exp\left(\frac{2\pi i}{k}\left(ah+bh^{-1}\right)\right).
\]
When $\chi$ is trivial, this is the classical Kloosterman sum.
\end{definition} 
With $\omega(h,k):=e^{\pi i s(h,k)}$, equation~\eqref{eq:alpha-kloosterman} becomes
\begin{equation}
\label{eq:Ak-omega}
A_k^{(\alpha)}(n,m) = \sum_{\substack{0\le h<k\\(h,k)=1}} \omega(h,k)^{\alpha} \exp\!\left(-\frac{2\pi i}{k}(mh^{-1}+nh)\right).
\end{equation}

The multiplier \(\omega(h,k)\) admits the following explicit description due to Lehmer \cite[(1.4), (1.5)]{Leh38}:
\begin{equation}
\label{eq:lehmer-omega}
\omega(h,k)=
\begin{cases}
\displaystyle
\left(\frac{-h}{k}\right)(-i)^{(k-1)/2}
\exp\!\left(\frac{\pi i}{12k}f(h,k)\right),
& k \text{ odd},\\[1em]
\displaystyle
\left(\frac{-k}{h}\right)(-i)
\exp\!\left(\frac{\pi i}{12k}f(h,k)\right),
& k \text{ even},
\end{cases}
\end{equation}
where
\begin{equation}
\label{eq:f-hk}
f(h,k)
:=
\begin{cases}
\displaystyle
-(k^2-1)\left(2h+h^{-1}-h^2h^{-1}\right),
& k \text{ odd},\\[0.8em]
\displaystyle
h(k+1)(k+2)+(k^2-1)(h^2-1)h^{-1},
& k \text{ even}.
\end{cases}
\end{equation}

In order to rewrite \eqref{eq:Ak-omega} as a twisted Kloosterman sum, it is enough to determine \(f(h,k)\) modulo \(24k\).  We record the needed congruences in the following lemma.
\begin{lemma}
\label{lem:f-congruences}
Let $k\geq1$ and let $h$ be an integer with $(h,k)=1$. The function $f(h,k)$ defined in~\eqref{eq:f-hk} satisfies the following congruences. In each congruence, $h^{-1}$ denotes an inverse of $h$ modulo the indicated modulus.
\begin{enumerate}
\item[(i)] If $k$ is odd, then
\[
f(h,k)\equiv h+h^{-1}\pmod{k\text{ or }3k},
\]
where the modulus is $k$ if $3\nmid k$ and $3k$ if $3\mid k$. If $3\nmid k$, then also
\[
f(h,k)\equiv0\pmod3.
\]
For every odd $k$,
\[
f(h,k)\equiv0\pmod8.
\]
\item[(ii)] If $k=2^\lambda k_1$, where $\lambda\geq1$ and $k_1$ is odd, then
\[
f(h,k)\equiv h+h^{-1}\pmod{k_1\text{ or }3k_1},
\]
where the modulus is $k_1$ if $3\nmid k_1$ and $3k_1$ if $3\mid k_1$. If $3\nmid k_1$, then also
\[
f(h,k)\equiv0\pmod3.
\]
For every odd $k_1$, we also have
\[
f(h,k)\equiv h+h^{-1}+k(k+3)h\pmod{2^{\lambda+3}}.
\]
\end{enumerate}
\end{lemma}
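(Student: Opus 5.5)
The plan is to reduce each claimed congruence to a direct expansion of the polynomial expression~\eqref{eq:f-hk} in $h$ and $h^{-1}$. At each step we use only $hh^{-1}\equiv1$ modulo the relevant modulus and the elementary divisibilities $8\mid t^2-1$ for odd $t$ and $3\mid t^2-1$ for $3\nmid t$. Before the case analysis I would record one consistency check. The formula~\eqref{eq:f-hk} involves $h^{-1}$ only through the combination $(k^2-1)(h^2-1)h^{-1}$ (up to sign), so each congruence is read with $h^{-1}$ taken as an inverse modulo the modulus in question. Since that modulus divides $24k$ or $2^{\lambda+3}k_1$, the reductions below are meaningful once we work modulo that modulus throughout.

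\emph{Case $k$ odd.} Rewrite $f(h,k)=-(k^2-1)\bigl(2h+h^{-1}(1-h^2)\bigr)$.
\begin{enumerate}
\item[(a)] Modulo $k$ we have $k^2-1\equiv-1$ and $h^{-1}h^2\equiv h$. Hence $f\equiv 2h+h^{-1}-h=h+h^{-1}$.
\item[(b)] If $3\mid k$, then $3k\mid k^2$, so again $k^2-1\equiv-1\pmod{3k}$. Taking $h^{-1}$ modulo $3k$, the same computation gives $f\equiv h+h^{-1}\pmod{3k}$.
\item[(c)] If $3\nmid k$, then $3\mid k^2-1$, so $f\equiv0\pmod 3$.
\item[(d)] Since $k$ is odd, $8\mid k^2-1$, so $f\equiv0\pmod 8$.
\end{enumerate}

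\emph{Case $k=2^\lambda k_1$.} Write $(k+1)(k+2)=k^2+3k+2$.
\begin{enumerate}
\item[(a)] Modulo $k_1$ we have $k\equiv0$, so $f\equiv 2h-(h^2-1)h^{-1}=2h-h+h^{-1}=h+h^{-1}$.
\item[(b)] If $3\mid k_1$, then $3k_1$ divides both $k^2=4^\lambda k_1^2$ and $3k$. So the same reduction holds modulo $3k_1$.
\item[(c)] If $3\nmid k_1$, then $3\nmid k$. Then $3\mid k^2-1$, and one of $k+1,k+2$ is divisible by $3$, so $f\equiv0\pmod3$.
\item[(d)] For the $2$-part, take $h^{-1}$ modulo $2^{\lambda+3}$ (here $h$ is odd). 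Expanding gives the exact identity
\[
f=h+h^{-1}+k(k+3)h+k^2\bigl(h-h^{-1}\bigr)+(\text{terms involving }hh^{-1}-1).
\]
More carefully, replace $(h^2-1)h^{-1}$ by $h-h^{-1}$, which is valid modulo $2^{\lambda+3}$. It then remains to show $2^{\lambda+3}\mid k^2(h-h^{-1})$. This holds because $h-h^{-1}\equiv h^{-1}(h^2-1)$ is divisible by $8$ and $2^{2\lambda}\mid k^2$, and $2\lambda+3\geq\lambda+3$.
\end{enumerate}

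The only step needing care, and the one I expect to be the main obstacle, is bookkeeping the choice of $h^{-1}$. The inverse in Lehmer's formula is an inverse modulo some multiple of $k$, while each congruence uses an inverse modulo a possibly different modulus. The coefficient $k^2-1$ (respectively $(k^2-1)(h^2-1)$) must absorb the ambiguity. For instance, changing $h^{-1}$ by a multiple of $2^{\lambda+3}$ in the even case is harmless because it multiplies an integer. What has to be checked is that replacing $hh^{-1}$ by $1$ is legitimate modulo the modulus at hand. I would handle this uniformly by noting that all the manipulations above are identities in $\mathbb Z[h,h^{-1}]$ modulo the ideal generated by $hh^{-1}-1$ and the modulus.
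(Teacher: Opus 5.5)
Your proposal is correct and is essentially the paper's own argument: reduce the defining expression for $f(h,k)$ modulo each indicated modulus using $hh^{-1}\equiv 1$ there, together with $8\mid x^2-1$ for odd $x$ and $3\mid x^2-1$ for $3\nmid x$. Your computations all check out, including $f\equiv h+h^{-1}+k(k+3)h+k^2(h-h^{-1})\pmod{2^{\lambda+3}}$ with $8\mid h-h^{-1}$.

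For the inverse bookkeeping you flag, your remark about changing $h^{-1}$ by multiples of $2^{\lambda+3}$ is not the relevant point; the key fact is that $24\mid (k^2-1)(h^2-1)$ whenever $(h,k)=1$. Consequently, replacing $h^{-1}$ by another inverse modulo $k$ changes $f(h,k)$ by a multiple of $24k$. So in each congruence involving $h^{-1}$ you may evaluate $f(h,k)$ with an inverse modulo the least common multiple of $k$ and the indicated modulus.
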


\begin{proof}
The congruences follow by reducing the defining expressions for $f(h,k)$ modulo the indicated moduli, using the corresponding modular inverse relations and the fact that $x^2\equiv1\pmod8$ for every odd integer $x$.
\end{proof}

The preceding congruences hold for arbitrary $k$. We now specialize to prime-power moduli and apply the Chinese remainder theorem to obtain the following description of $f(h,k)$ modulo $24k$.

\begin{lemma}
\label{lem:f-prime-power}
Let $k=p^\lambda$, where $p$ is prime, and let $h$ be an integer with $(h,k)=1$. Then
\[
f(h,k)\equiv c \cdot t(h)\pmod{24k},
\]
where \(c\) and \(t(h)\) are given by
\[
\renewcommand{\arraystretch}{1.8}
\begin{array}{|c|c|c|}
\hline
p & c & t(h) \\
\hline
p>3
&
24
&
24^{-1}(h+h^{-1}) \pmod{k}
\\
\hline
p=3
&
8
&
8^{-1}(h+h^{-1}) \pmod{3k}
\\
\hline
p=2
&
3
&
3^{-1}\bigl(h+h^{-1}+k(k+3)h\bigr) \pmod{8k}
\\
\hline
\end{array}
\]
Here \(24^{-1}\), \(8^{-1}\), and \(3^{-1}\) denote the multiplicative inverses of \(24\), \(8\), and \(3\) modulo \(k\), \(3k\), and \(8k\),
respectively.
\end{lemma}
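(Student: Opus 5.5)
The plan is to combine the congruences of Lemma~\ref{lem:f-congruences} via the Chinese remainder theorem, splitting $24k$ into coprime pieces in each of the three cases. Since $k=p^\lambda$, the modulus $24k$ factors as $24p^\lambda$. For each $p$ we isolate the factor that Lemma~\ref{lem:f-congruences} controls directly. In every case the coprime complementary factor is one on which $f(h,k)$ is known to vanish.

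\emph{Case $p>3$.} Here $24k=8\cdot3\cdot k$ with $8$, $3$, $k$ pairwise coprime. By Lemma~\ref{lem:f-congruences}(i) (note $k$ is odd and $3\nmid k$), $f(h,k)\equiv0\pmod 8$, $f(h,k)\equiv0\pmod 3$, and $f(h,k)\equiv h+h^{-1}\pmod k$. Set $t(h):=24^{-1}(h+h^{-1})\bmod k$, which makes sense because $(24,k)=1$. Then $24t(h)$ is $\equiv0\pmod{24}$ and $\equiv h+h^{-1}\pmod k$. By CRT modulo $24k=24\cdot k$, this gives $f(h,k)\equiv24\,t(h)\pmod{24k}$.

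\emph{Case $p=3$.} Now $24k=8\cdot 3k$ with $(8,3k)=1$. Lemma~\ref{lem:f-congruences}(i) with $3\mid k$ gives $f\equiv h+h^{-1}\pmod{3k}$ and $f\equiv0\pmod8$. Take $t(h):=8^{-1}(h+h^{-1})\bmod 3k$. Then $8t(h)$ matches $f$ modulo both $8$ and $3k$, so CRT yields $f\equiv 8t(h)\pmod{24k}$.

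\emph{Case $p=2$.} Write $k=2^\lambda$, so $k_1=1$ in Lemma~\ref{lem:f-congruences}(ii). Then $24k=3\cdot 2^{\lambda+3}=3\cdot 8k$ with $(3,8k)=1$. Since $3\nmid k_1$, part (ii) gives $f\equiv0\pmod3$ and $f\equiv h+h^{-1}+k(k+3)h\pmod{8k}$. Here $h$ is odd and $h^{-1}$ is an inverse modulo $8k$. Take $t(h):=3^{-1}(h+h^{-1}+k(k+3)h)\bmod 8k$; then $3t(h)$ matches $f$ on both factors.

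The only point requiring care is consistency of the inverse $h^{-1}$. In the table it is an inverse modulo $k$, $3k$, or $8k$, whereas in the definition \eqref{eq:f-hk} it is a fixed inverse modulo $k$. I expect this to be the main (mild) obstacle. The resolution is that Lemma~\ref{lem:f-congruences} is stated with $h^{-1}$ an inverse modulo the indicated modulus, so the right-hand sides are well defined modulo those moduli. Beyond this, the proof is a direct CRT assembly of the three congruences.
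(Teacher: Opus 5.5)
Your proposal is correct and matches the paper's proof: each row of the table comes from applying the Chinese remainder theorem to the congruences of Lemma~\ref{lem:f-congruences}, with $24k$ split as $24\cdot k$ for $p>3$, $8\cdot 3k$ for $p=3$, and $3\cdot 8k$ for $p=2$. Your treatment of the choice of $h^{-1}$ is also sound, because Lemma~\ref{lem:f-congruences} already takes $h^{-1}$ modulo the indicated modulus.
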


\begin{proof}
Apply the Chinese remainder theorem to the congruences in Lemma~\ref{lem:f-congruences}, considering separately $p>3$, $p=3$, and $p=2$.
\end{proof}
Combining Lemma~\ref{lem:f-prime-power} with \eqref{eq:Ak-omega} and \eqref{eq:lehmer-omega} yields the following prime-power formulas, which constitute the second main arithmetic ingredient of the algorithm.
\begin{theorem}
\label{prop:Ak-prime-power}
Let \(k=p^\lambda\), where \(p\) is prime. Define
\[
\chi_1(h):=\left(\frac{-h}{k}\right)^\alpha,
\qquad
\chi_2(h):=\left(\frac{-k}{h}\right)^\alpha.
\]
Then
\[
A_k^{(\alpha)}(n,m)=C\cdot\,S_{\chi}(a,b;k_p),
\]
where the parameters are given in the following table:
\[
\renewcommand{\arraystretch}{1.8}
\begin{array}{|c|c|c|c|c|c|}
\hline
p & k_p & C & \chi & a & b \\
\hline
p>3
&
k
&
(-i)^{\alpha(k-1)/2}
&
\chi_1
&
24^{-1}\alpha-n
&
24^{-1}\alpha-m
\\
\hline
p=3
&
3k
&
\frac{1}{3}(-i)^{\alpha(k-1)/2}
&
\chi_1
&
8^{-1}\alpha-3n
&
8^{-1}\alpha-3m
\\
\hline
p=2
&
8k
&
\frac{1}{8}(-i)^\alpha
&
\chi_2
&
3^{-1}\alpha(k^2+3k+1)-8n
&
3^{-1}\alpha-8m
\\
\hline
\end{array}
\]
Here the inverses \(24^{-1}\), \(8^{-1}\), and \(3^{-1}\) are taken modulo
\(k_p\) in their respective cases.
\end{theorem}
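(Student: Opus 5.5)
The plan is to substitute Lehmer's formula \eqref{eq:lehmer-omega} into \eqref{eq:Ak-omega}, raise it to the $\alpha$-th power, and use Lemma~\ref{lem:f-prime-power} to rewrite the phase $\frac{\pi i\alpha}{12k}f(h,k)$ as $\frac{2\pi i}{k_p}\,\alpha\, t'(h)$. Here $t'(h)$ is a linear combination of $h$ and $h^{-1}$ taken modulo $k_p$. Since $\alpha\in\mathbb Z$, the factor $\omega(h,k)^\alpha$ becomes $C'\chi(h)\,e\bigl(\alpha t'(h)/k_p\bigr)$, with $C'=(-i)^{\alpha(k-1)/2}$ for odd $k$ and $C'=(-i)^\alpha$ for $p=2$. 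The character is $\chi_1$ or $\chi_2$ according to the parity of $k$. We then combine this with $e(-(mh^{-1}+nh)/k)$ and lift the summation from $(\mathbb Z/k)^\times$ to $(\mathbb Z/k_p)^\times$.

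\textbf{Case $p>3$.} Here $k_p=k$ and $24\mid f$ up to multiples of $24k$. By Lemma~\ref{lem:f-prime-power}, $f\equiv 24\,t(h)\pmod{24k}$, so
\[
\tfrac{\pi i\alpha}{12k}f=\tfrac{2\pi i\alpha}{k}\,24^{-1}(h+h^{-1})\pmod{2\pi i}.
\]
This is well defined because the congruence holds modulo $24k$. Adding $-\frac{2\pi i}{k}(nh+mh^{-1})$ then gives $S_{\chi_1}(24^{-1}\alpha-n,\,24^{-1}\alpha-m;k)$ directly. Note that $\chi_1(h)=\bigl(\tfrac{-h}{k}\bigr)^\alpha$ depends only on $h \bmod k$, so it is a Dirichlet character modulo $k$.

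\textbf{Case $p=3$.} Here $f\equiv 8t(h)\pmod{24k}$ with $t$ defined modulo $3k$. The exponent becomes $\frac{2\pi i}{3k}\alpha t(h)$, and the linear term becomes $\frac{2\pi i}{3k}(3nh+3mh^{-1})$. Each summand is therefore a function of $h \bmod 3k$ (the inverse $h^{-1}$ is now taken modulo $3k$, which is consistent because $3mh^{-1}$ only depends on $h^{-1}\bmod k$). The units modulo $3k$ map $3$-to-$1$ onto units modulo $k$ because $3\mid k$. Summing over $h \bmod 3k$ therefore counts each term three times, which produces the factor $1/3$.

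\textbf{Case $p=2$.} We use $f\equiv 3t(h)\pmod{8k}\cdot$, with the $24k$ compatibility coming from $f\equiv0\pmod 3$ in Lemma~\ref{lem:f-congruences}(ii) since $k_1=1$. The exponent becomes $\frac{2\pi i}{8k}\alpha\,3^{-1}\bigl((1+k(k+3))h+h^{-1}\bigr)$, which gives $a=3^{-1}\alpha(k^2+3k+1)-8n$ and $b=3^{-1}\alpha-8m$. The character $\chi_2(h)=\bigl(\tfrac{-k}{h}\bigr)^\alpha$ is a Kronecker symbol with period dividing $4k$, hence periodic modulo $8k$. Lifting from $(\mathbb Z/k)^\times$ to $(\mathbb Z/8k)^\times$ is $8$-to-$1$, which gives the factor $1/8$.

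\textbf{Main obstacle.} The delicate step is the well-definedness check in the two lifted cases. We must verify that each summand, including Lehmer's factor $\omega(h,k)$ itself (which depends only on $h\bmod k$), agrees with the lifted expression for every representative $h\bmod k_p$. The worry is that $t(h)$ is defined modulo $3k$ or $8k$ while the original sum is over $h \bmod k$. The resolution is that Lemma~\ref{lem:f-prime-power} holds for every integer $h$ prime to $k$, with inverses taken modulo $k_p$. Hence for any integer lift $h$ the identity $\omega(h,k)^\alpha e(-(mh^{-1}+nh)/k)=C'\chi(h)e((ah+bh^{-1})/k_p)$ holds pointwise. Since the left side is $k$-periodic, averaging over the $k_p/k$ lifts is legitimate and gives $C=C'\,k/k_p$.
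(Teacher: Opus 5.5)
Your proposal is correct and follows the same route as the paper. You substitute Lehmer's formula \eqref{eq:lehmer-omega} into \eqref{eq:Ak-omega}, use Lemma~\ref{lem:f-prime-power} to rewrite the phase with denominator $k_p$, and lift the sum from $(\mathbb Z/k\mathbb Z)^\times$ to $(\mathbb Z/k_p\mathbb Z)^\times$, which gives the factor $k/k_p\in\{1,1/3,1/8\}$; your pointwise check that each lifted summand agrees with the original $k$-periodic summand is more explicit than the paper, which simply asserts the lifting step.
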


\begin{proof}
We give the details for \(p=2\), the other cases being analogous.
Let \(k=2^\lambda\). By equation \eqref{eq:lehmer-omega},
\[
\omega(h,k)^\alpha
=
\left(\frac{-k}{h}\right)^\alpha
(-i)^\alpha
\exp\left(\frac{\pi i\alpha}{12k}f(h,k)\right).
\]
By Lemma~\ref{lem:f-prime-power},
\[
f(h,k)\equiv
3\cdot 3^{-1}
\left(h+h^{-1}+k(k+3)h\right)
\pmod{24k},
\]
where \(3^{-1}\) is taken modulo \(8k\). Hence
\[
\exp\left(\frac{\pi i\alpha}{12k}f(h,k)\right)
=
\exp\left(
\frac{2\pi i}{8k}
3^{-1}\alpha
\left(h+h^{-1}+k(k+3)h\right)
\right).
\]
Substituting this into \eqref{eq:Ak-omega}, and writing the original
exponential with denominator \(8k\), gives
\[
A_k^{(\alpha)}(n,m)
=
(-i)^\alpha
\sum_{\substack{h\bmod k\\(h,k)=1}}
\left(\frac{-k}{h}\right)^\alpha
\exp\left(
\frac{2\pi i}{8k}
\left(
ah+bh^{-1}
\right)
\right),
\]
where $a=3^{-1}\alpha(k^2+3k+1)-8n$ and $b=3^{-1}\alpha-8m$. Since each reduced residue class modulo
\(k\) has exactly \(8\) reduced lifts modulo \(8k\), we obtain
\[
A_k^{(\alpha)}(n,m)
=
\frac{(-i)^\alpha}{8}
\sum_{\substack{h\bmod 8k\\(h,8k)=1}}
\left(\frac{-k}{h}\right)^\alpha
\exp\left(
\frac{2\pi i}{8k}
\left(
ah+bh^{-1}
\right)
\right).
\]
This is
\[
A_k^{(\alpha)}(n,m)
=
\frac{(-i)^\alpha}{8}S_{\chi_2}(a,b;8k).
\]
For \(p>3\), Lemma~\ref{lem:f-prime-power} gives
\(f(h,k)\equiv 24\cdot 24^{-1}(h+h^{-1})\pmod{24k}\), and no rescaling is needed. This yields the first row of the table. For \(p=3\), the same argument gives a sum naturally modulo \(3k\); each reduced residue class modulo \(k\) has exactly \(3\) reduced lifts modulo \(3k\), giving the factor \(1/3\). This gives the second row.
\end{proof}
\begin{remark}
In Theorem~\ref{prop:Ak-prime-power}, the character \(\chi\) is determined by the parities of \(\alpha\) and \(\lambda\). If \(\alpha\) is even, then \(\chi\) is the trivial character $\mathbf{1}$. If \(\alpha\) is odd, then
\[
\chi(h)=
\begin{cases}
\left(\frac{-h}{p}\right), & p\geq 3 \text{ and } \lambda \text{ is odd},\\[4pt]
\mathbf{1}, & p\geq 3 \text{ and } \lambda \text{ is even},\\[4pt]
\left(\frac{-2}{h}\right), & p=2 \text{ and } \lambda \text{ is odd},\\[4pt]
\left(\frac{-1}{h}\right), & p=2 \text{ and } \lambda \text{ is even}.
\end{cases}
\]
Thus the sums appearing in Theorem~\ref{prop:Ak-prime-power} are either classical Kloosterman sums or quadratic twists. Section~\ref{KloosEval} gives the formulas used to evaluate these sums.
\end{remark}

\section{Evaluation of Kloosterman sums}\label{KloosEval}
This section collects the local Kloosterman-sum evaluations needed in Theorem~\ref{prop:Ak-prime-power}. We use the formulas established in \cite[Sections~3--5]{AJNJ}; see also
\cite[Part~I, Section~5]{derasis2026thesis}.

Let $k=p^\lambda$ be a prime power, and suppose that $\chi$ is defined modulo $p^\beta$, where $\beta\leq\lambda$. Write $\mathbf{1}$ for the trivial character.
\begin{proposition}\label{identity:s_gamma}
	Given integers $a,b$, write $\gcd(a,b,k) = p^\gamma$.
	If $\gamma < \lambda - \val_p(2)$, assume that $\beta \leq \lambda-\gamma$
	and that $\gamma = \val_p(b)$.
	Then
	\begin{equation}
		\label{eqn:selberg}
		S_\chi\left(a,b;k\right)=
		\begin{cases}
			0,
			  &\gamma = \lambda, \, \chi \neq \mathbf{1},\\
			p^\lambda - p^{\lambda -1}
			  &\gamma = \lambda, \,\chi = \mathbf{1},\\
            p^\gamma \cdot \chi(b/p^\gamma) \cdot
			S_\chi\left(ab/p^{2\gamma},1;p^{\lambda-\gamma}\right)
			, &\gamma < \lambda - \val_p(2) .
		\end{cases}
	\end{equation}
	Finally, if $p = 2$ and $\gamma = \lambda-1$ then
	\begin{equation}
		\label{eqn:selberg2}
		S_\chi\left(a,b;k\right) = 
		\begin{cases}
				0,& \chi \neq \mathbf{1},\\
				2^{\lambda-1} \, (-1)^{(a+b)/2^\gamma}, & \chi = \mathbf{1}.
		\end{cases}
	\end{equation}
        
\end{proposition}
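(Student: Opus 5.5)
The plan is to treat the three regimes of $\gamma$ separately, working directly from Definition~\ref{def:twisted-kloosterman} with $k=p^\lambda$. When $\gamma=\lambda$, both $a$ and $b$ are divisible by $k$, so every exponential equals $1$. Then $S_\chi(a,b;k)=\sum_{h\in(\mathbb Z/k\mathbb Z)^\times}\chi(h)$, and orthogonality of characters gives $0$ for $\chi\neq\mathbf 1$ and $\varphi(p^\lambda)=p^\lambda-p^{\lambda-1}$ for $\chi=\mathbf 1$.

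For $\gamma<\lambda-\val_p(2)$ with $\gamma=\val_p(b)$, I would first write $b=p^\gamma b'$ with $p\nmid b'$, and $a=p^\gamma a'$. Substituting $h\mapsto b'h$ (a bijection of $(\mathbb Z/k)^\times$) turns $bh^{-1}$ into $p^\gamma h^{-1}$ and $ah$ into $p^\gamma a'b'h=(ab/p^{2\gamma})\,p^\gamma h$. This pulls out the factor $\chi(b')=\chi(b/p^\gamma)$. The phase now reads $\exp\bigl(\frac{2\pi i}{p^{\lambda-\gamma}}(ch+h^{-1})\bigr)$ with $c=ab/p^{2\gamma}$, which depends only on $h\bmod p^{\lambda-\gamma}$. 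Since $\beta\le\lambda-\gamma$, the character $\chi(h)$ also depends only on $h\bmod p^{\lambda-\gamma}$, so I regard it as a character modulo $p^{\lambda-\gamma}$. Each reduced class mod $p^{\lambda-\gamma}$ has exactly $p^\gamma$ reduced lifts mod $p^\lambda$ (because $\lambda-\gamma\ge1$). Collecting these lifts gives $p^\gamma\chi(b/p^\gamma)S_\chi(ab/p^{2\gamma},1;p^{\lambda-\gamma})$. This is the same lifting count used in the proof of Theorem~\ref{prop:Ak-prime-power}.

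The main obstacle is the case $p=2$, $\gamma=\lambda-1$, where no hypothesis on $\beta$ is available. Here $a=2^{\lambda-1}a'$ and $b=2^{\lambda-1}b'$ with $a',b'$ not both even. The phase is $(-1)^{a'h+b'h^{-1}}=(-1)^{a'+b'}$, since $h$ and $h^{-1}$ are odd. This phase is constant, so $S_\chi(a,b;k)=(-1)^{(a+b)/2^\gamma}\sum_h\chi(h)$. Orthogonality then gives $0$ when $\chi\neq\mathbf 1$ and $2^{\lambda-1}(-1)^{(a+b)/2^\gamma}$ when $\chi=\mathbf 1$.

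I would also check the edge case $\lambda=1$, $\gamma=0$, which is covered by this computation. The only care needed is that $\chi$ is genuinely a character mod $2^\lambda$, so that orthogonality over $(\mathbb Z/2^\lambda)^\times$ applies.
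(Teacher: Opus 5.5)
Your argument is correct and complete. The substitution $h\mapsto (b/p^\gamma)h$ handles the case $\gamma<\lambda-\operatorname{val}_p(2)$: after it, the phase factors through $h \bmod p^{\lambda-\gamma}$, and so does the character because $\beta\le\lambda-\gamma$; each reduced class then has $p^\gamma$ reduced lifts. Orthogonality of characters handles the two degenerate cases $\gamma=\lambda$ and $(p,\gamma)=(2,\lambda-1)$, where the phase is constant. Together these give exactly the stated formulas.

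The paper gives no argument of its own here and simply cites \cite[Proposition~3.3]{AJNJ}. Your direct, elementary verification has the advantage of making this section self-contained.
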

\begin{proof}
    See \cite[Proposition~3.3]{AJNJ}.
\end{proof}
For the real-valued characters used below, inversion gives $S_\chi(a,b;k)=S_\chi(b,a;k)$. If $\gamma<\lambda$, then $\gamma=\min\{\val_p(a),\val_p(b)\}$. Interchanging $a$ and $b$ if necessary, we may therefore assume that $\gamma=\val_p(b)$. In the cases considered below, Proposition~\ref{identity:s_gamma} then reduces the remaining evaluations to sums of the form $S_\chi(a,1;k)$. We use this reduction in our implementation.
\begin{definition}
Let $p$ be an odd prime and let $k=p^\lambda$, where $\lambda\geq1$. Define
\[ 
\epsilon_k :=
\begin{cases}
1, & k\equiv1\pmod4,\\
i, & k\equiv3\pmod4.
\end{cases}
\]
We write $\left(\frac{\cdot}{p}\right)$ and $\left(\frac{\cdot}{k}\right)$ for the Legendre and Jacobi symbols, respectively.
\end{definition}
\begin{proposition}
    \label{prop:acuadradop}
    Assume that \(p>2\), and let \(a\) be an integer. Then
    \[
    S_\krop(a,1;k)
    =
    \begin{cases}
        0,
        & p\nmid a
          \text{ and }a\text{ is not a square modulo }k,\\[2mm]
        \displaystyle
        \overline{\delta_k}\epsilon_k
        \legendre{\theta}{p}\legendre{\theta}{k}
        2\sqrt k\,
        \op{Re}\left(
            \delta_k e^{4\pi i \theta/k}
        \right),
        & a\equiv \theta^2\pmod k \text{ and } p\nmid \theta,\\[3mm]
        0,
        & p\mid a \text{ and } \lambda>1,\\[1mm]
        \epsilon_p\sqrt p,
        & p\mid a \text{ and } \lambda=1,
    \end{cases}
    \]
    where
    \[
    \delta_k:=
    \begin{cases}
        1, & pk\equiv1\pmod4,\\
        i, & pk\equiv3\pmod4.
    \end{cases}
    \]
\end{proposition}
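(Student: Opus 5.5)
We split according to whether $p\mid a$, and in the main case $p\nmid a$ we use the stationary-phase (Salié) method, which reduces the sum to its critical points $h\equiv\pm\theta^{-1}$. Throughout, $\chi=\krop$ depends only on $h\bmod p$, and we write $e_k(x):=e^{2\pi i x/k}$.

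\textbf{Case $p\mid a$.} First let $\lambda=1$. Then $S_\krop(a,1;p)=\sum_h \legendre{h}{p}e_p(h^{-1})$. The substitution $h\mapsto h^{-1}$ leaves the Legendre symbol unchanged, so this is the quadratic Gauss sum $\sum_h\legendre{h}{p}e_p(h)=\epsilon_p\sqrt p$.

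Now let $\lambda>1$. Replace $h$ by $h(1+p^{\lambda-1}t)$ with $t\bmod p$; as $h$ and $t$ vary this runs over each reduced class $p$ times.
\begin{itemize}
\item Since $p\mid a$, the term $ah$ is unchanged modulo $p^\lambda$.
\item The inverse becomes $h^{-1}(1-p^{\lambda-1}t)$.
\item The character $\legendre{h}{p}$ is unchanged.
\end{itemize}
Averaging over $t$ therefore multiplies each summand by $p^{-1}\sum_{t\bmod p}e_p(-h^{-1}t)=0$, so $S_\krop(a,1;k)=0$.

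\textbf{Case $p\nmid a$, $\lambda=1$.} Here we follow Salié's classical argument. Group the terms by $u=ah+h^{-1}$, so that
\[
S=\sum_{u\bmod p}e_p(u)\sum_{ah^2-uh+1\equiv0}\legendre{h}{p}.
\]
The two roots of $ah^2-uh+1$ have product $a^{-1}$, so they carry the same symbol $\legendre{h}{p}$ exactly when $a$ is a square.

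If $a$ is not a square, the inner sums cancel in pairs. The double root $u^2=4a$ cannot occur because $4a$ is not a square. Hence $S=0$.

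If $a\equiv\theta^2$, we evaluate the inner sum directly. It is $\legendre{h}{p}\bigl(1+\legendre{u^2-4a}{p}\bigr)$, with $\legendre{h}{p}$ expressible as $\legendre{\theta}{p}\legendre{u\pm 2\theta}{p}$ via $(\theta h\pm1)^2=\theta h\,(u\pm2\theta)\cdot(\text{square})$. Summing then turns into a Gauss sum shifted to $u=\pm2\theta$. This produces
\[
\epsilon_p\legendre{\theta}{p}\sqrt p\,\bigl(e_p(2\theta)+\legendre{-1}{p}e_p(-2\theta)\bigr),
\]
which is the stated expression with $\delta$ and $\operatorname{Re}$.

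\textbf{Case $p\nmid a$, $\lambda\ge2$.} Here we use the $p$-adic stationary-phase method.
\begin{enumerate}
\item Write $h=x(1+p^{\nu}t)$ with $\nu=\lceil\lambda/2\rceil$, and expand the phase: $ax(1+p^\nu t)+x^{-1}(1-p^\nu t+p^{2\nu}t^2-\cdots)$. Since $2\nu\ge\lambda$, the higher-order terms vanish modulo $p^\lambda$ (taking care when $\lambda$ is odd).
\item Summing over $t$ forces $ax^2\equiv1\pmod{p^{\lambda-\nu}}$. So $S=0$ unless $a$ is a square modulo $p$, which by Hensel's lemma is the same as $a$ being a square modulo $k$.
\item When $a\equiv\theta^2\pmod k$, only the two lifts $x\equiv\pm\theta^{-1}$ contribute. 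The phase at these points is $e_k(\pm2\theta)$, and the character values are $\legendre{\pm\theta}{p}$.
\item For $\lambda$ even, each critical point contributes a factor $\sqrt{k}$ times a trivial quadratic sum. For $\lambda$ odd, there remains a Gauss sum modulo $p$ in the middle digit, which contributes $\epsilon_p\legendre{\cdot}{p}\sqrt p$.
\item Adding the $\pm$ contributions gives $2\sqrt k\,\operatorname{Re}(\delta_k e_k(2\theta))$ after factoring out the constant. The Jacobi symbol $\legendre{\theta}{k}$ comes from the parity of $\lambda$.
\end{enumerate}

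\textbf{Main obstacle.} The hardest step is the bookkeeping of the unit constants: matching $\overline{\delta_k}\epsilon_k\legendre{\theta}{p}\legendre{\theta}{k}$ exactly, including the sign $\legendre{-1}{p}$ that relates the two critical points. The natural check is to separate $\lambda$ even and $\lambda$ odd and to use $\epsilon_p^2=\legendre{-1}{p}$. If this becomes unwieldy, we would cite \cite[Sections~3--5]{AJNJ} for the final normalization.
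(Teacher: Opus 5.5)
Your treatment of the cases $p\mid a$ and of the non-square case is fine. The square case at $\lambda=1$, however, contains a genuine error, and your claim that the result ``is the stated expression'' is false.

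From $\theta^2h^2+1\equiv uh$ one gets $(\theta h\pm1)^2\equiv h(u\pm2\theta)$, with no factor $\theta$. Hence $\legendre{h}{p}=\legendre{u\pm2\theta}{p}$, not $\legendre{\theta}{p}\legendre{u\pm2\theta}{p}$. Your final value $\epsilon_p\legendre{\theta}{p}\sqrt p\,\bigl(e_p(2\theta)+\legendre{-1}{p}e_p(-2\theta)\bigr)$ is therefore wrong. For $p=3$, $a=\theta=1$, the sum is $e_3(2)-e_3(1)=-i\sqrt3$, whereas your expression equals $i\sqrt3\,(e_3(2)-e_3(1))=3$. It also disagrees with the statement: at $\lambda=1$ one has $\delta_k=1$ (since $pk=p^2\equiv1\pmod 4$) and $\legendre{\theta}{p}\legendre{\theta}{k}=\legendre{\theta}{p}^2=1$, so the statement reads $2\epsilon_p\sqrt p\cos(4\pi\theta/p)$.

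The correct bookkeeping is that the inner sum equals $\legendre{u+2\theta}{p}+\legendre{u-2\theta}{p}$ for every $u$, and this also covers the double roots $u=\pm2\theta$. Shifting the Gauss sum then gives $S=\epsilon_p\sqrt p\,\bigl(e_p(2\theta)+e_p(-2\theta)\bigr)$, as required. It looks as if you imported the even-$\lambda$ shape, with its factors $\legendre{\theta}{p}$ and $\legendre{-1}{p}$, into the odd case $\lambda=1$.

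For $\lambda\ge2$ your stationary-phase outline is sound. But the whole content of the proposition is the unit constant, and you leave it unchecked or defer it to a citation. It closes in a few lines.

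\begin{itemize}
\item For $\lambda=2\nu$, you get $S=p^\nu\sum_{\pm}\legendre{\pm\theta}{p}e_k(\pm2\theta)=\sqrt k\legendre{\theta}{p}\bigl(e_k(2\theta)+\legendre{-1}{p}e_k(-2\theta)\bigr)$.
\item For $\lambda=2\nu+1\ge3$, first restrict the sum to $h\equiv\pm\theta^{-1}\pmod{p^\nu}$. Write $h=\pm\theta^{-1}(1+p^\nu s)$ with $s$ modulo $p^{\nu+1}$. Then $ah+h^{-1}\equiv\pm\theta(2+p^{2\nu}s^2)\pmod k$. The Gauss sum $p^\nu\sum_{s\bmod p}e_p(\pm\theta s^2)=p^\nu\legendre{\pm\theta}{p}\epsilon_p\sqrt p$ cancels the character value, so $S=\epsilon_p\sqrt k\,\bigl(e_k(2\theta)+e_k(-2\theta)\bigr)$.
\end{itemize}

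To match the statement, use $\overline{\delta_k}\,2\operatorname{Re}\bigl(\delta_ke_k(2\theta)\bigr)=e_k(2\theta)+\legendre{-1}{p}^{\lambda+1}e_k(-2\theta)$. For even $\lambda$, also use $\epsilon_k=1$ and $\legendre{\theta}{p}\legendre{\theta}{k}=\legendre{\theta}{p}$. For odd $\lambda$, use $\epsilon_k=\epsilon_p$ and $\legendre{\theta}{p}\legendre{\theta}{k}=1$. Note that $\lambda=1$ follows the odd pattern, which is exactly where your computation went wrong.

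For comparison, the paper does not compute at all. It cites Lehmer, or reduces via $S_\chi(a,1;k)=\overline{\chi}(\theta)S_\chi(\theta,\theta;k)$ to Williams' evaluation of the symmetric Sali\'e sum. With the corrections above, your direct argument would be a legitimate self-contained alternative to that citation.
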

\begin{proposition}
    \label{prop:s1-computation}
    Assume that \(p>2\), and let \(a\) be an integer. Except in the case \(\lambda=1\) and \(p\nmid a\), we have
    \[
    S_{\mathbf{1}}(a,1;k)
    =
    \begin{cases}
        0,
        &
        p\nmid a,\quad
        a\text{ is not a square modulo }k
        \text{ and }
        \lambda>1,
        \\[2mm]
        \displaystyle
        \legendre{\theta}{k}\,
        2\sqrt{k}\,
        \op{Re}\left(
            \epsilon_k e^{4\pi i\theta/k}
        \right),
        &
        a\equiv\theta^2\pmod{k},\quad
        p\nmid\theta\text{ and }
        \lambda>1,
        \\[3mm]
        0,
        &
        p\mid a\text{ and }
        \lambda>1,
        \\[1mm]
        -1,
        &
        p\mid a \text{ and }
        \lambda=1.
    \end{cases}
    \]
\end{proposition}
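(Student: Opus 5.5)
We treat the two cases with \(p\mid a\) by elementary averaging, and the case \(p\nmid a\), \(\lambda>1\), by the standard \(p\)-adic stationary phase method for Kloosterman sums. Throughout, write \(e_k(x):=e^{2\pi i x/k}\) and \(S:=S_{\mathbf 1}(a,1;k)=\sum_{h}e_k(ah+h^{-1})\), with \(h\) running over \((\mathbb Z/k\mathbb Z)^\times\).

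\emph{Case \(p\mid a\).} If \(\lambda=1\), then \(ah\equiv0\pmod p\). Hence \(S=\sum_{h}e_p(h^{-1})\) is the Ramanujan sum \(c_p(1)=-1\). If \(\lambda>1\), substitute \(h\mapsto h(1+p^{\lambda-1}t)\) for \(t\in\mathbb Z/p\mathbb Z\); this permutes the units modulo \(k\). Since \(2(\lambda-1)\ge\lambda\), we have \((1+p^{\lambda-1}t)^{-1}\equiv1-p^{\lambda-1}t\pmod k\). Moreover \(ahp^{\lambda-1}t\equiv0\pmod k\) because \(p\mid a\). Averaging over \(t\) therefore gives
\[
S=\frac1p\sum_h e_k(ah+h^{-1})\sum_{t\bmod p}e_p(-h^{-1}t).
\]
The inner sum vanishes since \(p\nmid h^{-1}\), so \(S=0\).

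\emph{Case \(p\nmid a\), \(\lambda>1\).} Write \(\lambda=2\nu+\rho\) with \(\rho\in\{0,1\}\) and \(\nu\ge1\). Parametrize the units as \(h=x(1+p^{\nu}y)\) with \(x\) running over units modulo \(p^{\nu+\rho}\) and \(y\) modulo \(p^{\nu}\). One checks that each unit modulo \(k\) is hit exactly \(p^{\rho}\) times (equivalently, normalize by this factor). Expanding modulo \(p^\lambda\),
\[
(1+p^\nu y)^{-1}\equiv 1-p^\nu y+p^{2\nu}y^2,
\]
where the last term matters only when \(\rho=1\). This gives
\[
ah+h^{-1}\equiv ax+x^{-1}+p^\nu y\,(ax-x^{-1})+\rho\, p^{2\nu}x^{-1}y^2 \pmod{k}.
\]

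For \(\rho=0\) the sum over \(y\) is \(p^\nu\) times the indicator of \(ax^2\equiv1\pmod{p^\nu}\). If \(a\) is not a square modulo \(k\), equivalently modulo \(p\), there is no solution and \(S=0\). Otherwise \(x\equiv\pm\theta^{-1}\pmod{p^\nu}\). On these two classes the phase equals \(e_k(\pm2\theta)\) after absorbing the correction, so
\[
S=2p^\nu\cos(4\pi\theta/k)=\legendre{\theta}{k}\,2\sqrt k\,\op{Re}\bigl(\epsilon_k e^{4\pi i\theta/k}\bigr),
\]
since \(\lambda\) even gives \(\legendre{\theta}{k}=1\) and \(k\equiv1\pmod 4\), so \(\epsilon_k=1\).

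For \(\rho=1\), I will first split \(y=y_0+py_1\). Summing over \(y_1\) again forces the congruence \(ax^2\equiv1\) to a sufficient power of \(p\). The remaining sum over \(y_0\bmod p\) is a quadratic Gauss sum
\[
\sum_{y_0}e_p\bigl(x^{-1}y_0^2+c\,y_0\bigr).
\]
Completing the square gives \(\legendre{x}{p}\epsilon_p\sqrt p\) times a phase that recombines with \(e_k(ax+x^{-1})\) into \(e_k(\pm2\theta)\). Collecting the two critical classes \(x\equiv\pm\theta^{-1}\) then yields \(p^\nu\epsilon_p\sqrt p\,\legendre{\theta}{p}\bigl(e_k(2\theta)+\legendre{-1}{p}e_k(-2\theta)\bigr)\). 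Using \(\epsilon_p^2=\legendre{-1}{p}\), \(\epsilon_k=\epsilon_p\) for odd \(\lambda\), and \(\legendre{\theta}{k}=\legendre{\theta}{p}\), this equals \(\legendre{\theta}{k}2\sqrt k\,\op{Re}(\epsilon_ke^{4\pi i\theta/k})\).

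The main obstacle is this odd-\(\lambda\) bookkeeping. One must fix the correct parametrization and multiplicity, perform the completion of the square in the Gauss sum carefully, and verify that the resulting sign matches \(\legendre{\theta}{k}\epsilon_k\) and is independent of the choice of square root \(\theta\). I would first check the formula numerically for \(k=27\) and \(k=125\) to pin down the signs before writing the general argument.
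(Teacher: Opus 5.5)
Your proof is correct in substance, but it takes a genuinely different route from the paper. The paper does no computation here. It cites Lehmer's lemma \cite{Leh38}, or alternatively substitutes $h\mapsto\theta^{-1}h$ to get $S_{\mathbf 1}(a,1;k)=S_{\mathbf 1}(\theta,\theta;k)$ and quotes Williams' closed form for the symmetric sum \cite{williams1971}. You instead reprove the Sali\'e-type evaluation directly:
\begin{itemize}
\item for $p\mid a$, via the Ramanujan sum and by averaging over $h\mapsto h(1+p^{\lambda-1}t)$;
\item for $p\nmid a$, via $p$-adic stationary phase. For even $\lambda$ only the critical classes $x\equiv\pm\theta^{-1}\pmod{p^\nu}$ survive. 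For odd $\lambda$ an extra quadratic Gauss sum modulo $p$ produces $\legendre{\theta}{k}$ and $\epsilon_k$.
\end{itemize}
Your final odd-$\lambda$ expression does match the statement, using $\overline{\epsilon_p}=\legendre{-1}{p}\epsilon_p$, $\epsilon_k=\epsilon_p$ and $\legendre{\theta}{k}=\legendre{\theta}{p}$. The paper's route is shorter, and the same references also give the twisted Proposition~\ref{prop:acuadradop}. Yours is self-contained and shows where each factor comes from. It also adapts to the twist $\legendre{\cdot}{p}$, since that character is constant on the classes $x(1+p^\nu y)$.

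One slip needs fixing in the odd case $\rho=1$: $y$ cannot run modulo $p^\nu$. There are three problems with that choice:
\begin{itemize}
\item $x(1+p^\nu y)$ modulo $p^{2\nu+1}$ depends on $y$ modulo $p^{\nu+1}$;
\item your $y_1$-sum would then not be a complete sum;
\item with $x$ modulo $p^{\nu+1}$ and $y$ modulo $p^\nu$ there are exactly $\varphi(k)$ pairs, which contradicts your claimed multiplicity $p$.
\end{itemize}
The cleanest fix is to let $x$ run over fixed lifts of the units modulo $p^\nu$ and $y$ over residues modulo $p^{\nu+1}$. This gives a bijection, and the $y_1$-sum modulo $p^\nu$ equals $p^\nu$ times the indicator of $ax^2\equiv1\pmod{p^\nu}$. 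Keeping $x$ modulo $p^{\nu+1}$ with $y$ modulo $p^{\nu+1}$ also works, with multiplicity $p$ as you said.

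If you moreover choose the lifts of the critical classes to be exactly $\pm\theta^{-1}$ modulo $k$, the linear term vanishes and no completion of the square is needed. The phase is $e_k(\pm2\theta)$, and the $y_0$-sum is $\sum_{y_0}e_p(\pm\theta y_0^2)=\legendre{\pm\theta}{p}\epsilon_p\sqrt p$. The same observation removes the ``absorbing the correction'' step in the even case, because there the phase on a critical class is independent of the lift.
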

\begin{proof}[Proof of Propositions~\ref{prop:acuadradop} and~\ref{prop:s1-computation}]
    See \cite[Lem.* 2]{Leh38}; alternatively, combine \cite{williams1971} with the identity \(S_\chi(a,1;k) = \overline{\chi}(\theta) \, S_\chi(\theta,\theta;k)\) when $a \equiv \theta^2 \pmod{k}$ with $p \nmid \theta$.
\end{proof}
The remaining case \(\lambda=1\) and \(p\nmid a\) is computed directly; no analogous elementary closed formula is available in general.
\begin{lemma}
Let \(p\) be an odd prime and let \(a\) be an integer. Then
\[
S_{\mathbf 1}(a,1;p)
=
2\sum_{h=1}^{(p-1)/2}
\cos\left(
\frac{2\pi}{p}\left(ah+h^{-1}\right)
\right).
\]
\end{lemma}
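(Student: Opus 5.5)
The plan is to pair each reduced residue \(h\) with its negative \(p-h\) and check that the two paired terms are complex conjugates. The sum defining \(S_{\mathbf 1}(a,1;p)\) runs over \(1\leq h\leq p-1\), since \(h=0\) is excluded by coprimality. The map \(h\mapsto p-h\equiv -h\pmod p\) is an involution on \((\mathbb Z/p\mathbb Z)^\times\). Because \(p\) is odd, it has no fixed points, as \(h\equiv -h\) would force \(p\mid 2h\). It therefore splits the reduced residues into the \((p-1)/2\) pairs \(\{h,p-h\}\) with \(1\leq h\leq (p-1)/2\), so each pair contains exactly one representative in the range of the stated sum.

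Next, I would compare the summands at \(h\) and \(-h\). The inverse of \(-h\) modulo \(p\) is \(-h^{-1}\), so the exponent attached to \(-h\) is
\[
\frac{2\pi i}{p}\left(a(-h)+(-h)^{-1}\right)=-\frac{2\pi i}{p}\left(ah+h^{-1}\right).
\]
The term for \(-h\) is therefore the complex conjugate of the term for \(h\). Since the exponential depends only on residues modulo \(p\), it does not matter which representatives of \(h^{-1}\) or \(p-h\) are used.

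Finally, I would add the two terms of each pair using \(e^{i x}+e^{-i x}=2\cos x\) with \(x=\frac{2\pi}{p}(ah+h^{-1})\), and then sum over \(1\leq h\leq (p-1)/2\). This gives the stated formula. The argument is elementary throughout, and the only point needing care is the fixed-point-free pairing, which is where oddness of \(p\) is used.
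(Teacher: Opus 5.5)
Your proof is correct: pairing \(h\) with \(-h\), a fixed-point-free involution on \((\mathbb Z/p\mathbb Z)^\times\) because \(p\) is odd, makes the two terms of each pair complex conjugates, and adding them gives exactly the stated cosine formula. The paper states this lemma without any proof, and your pairing argument is the standard elementary justification it implicitly relies on.
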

For $p=2$, the cases $1\leq\lambda\leq5$ are covered by \cite[Proposition~3.9]{AJNJ}. Recall that, for $\lambda\geq3$, an odd integer is a square modulo $2^\lambda$ if and only if it is congruent to $1$ modulo $8$. The following proposition gives a uniform evaluation for $\lambda\geq6$.
\begin{proposition}
	\label{prop:salie2b}
	Assume that $p=2$, $\lambda\geq 6$, and that $\chi$ has conductor dividing $8$. Then \(S_\chi(a,1;k)=0\) if $a\not\equiv 1\pmod 8$. If $a\equiv 1\pmod 8$, choose an odd integer $\theta$ such that \(a\equiv\theta^2\pmod k\), and set \(\omega=\exp(4\pi i\theta/k)\). Then the following formulas hold.
	\begin{enumerate}
        \item Assume that $\lambda = 2\nu$. Then
	\begin{equation*}
		S_\chi\left(a,1;k\right) =
            2^\nu\overline{\chi}(\theta) \left(		\omega\left(\chi(1)+i^\theta\chi\left(1+2^{\nu-1}\right)\right)
			+
			\overline{\omega}\left(\chi(-1)-i^\theta\chi\left(-1+2^{\nu-1}\right)\right)
			\right)
			.
	\end{equation*}
        \item Assume that $\lambda = 2\nu+1$. Then
	\begin{equation*}
		S_\chi\left(a,1;k\right) =
		2^\nu\overline{\chi}(\theta) \left(
			2\omega i^\theta (\sqrt{i})^{\theta t}\chi\left(1+2^{\nu -1}\right)
			+
		\overline{\omega} \chi\left(-1+2^{\nu-1}\right)
		\left((\sqrt{i})^{\theta t} + i^{-\theta} (\sqrt{i})^{\theta s}\right)
		\right)
		.
    \end{equation*}
    Here $s = 5$ if $\nu = 3$ and $s = 1$ otherwise, 
    and $t = 3$ if $\nu = 3$ and $t = -1$ otherwise.
	\end{enumerate}
\end{proposition}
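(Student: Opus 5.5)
The plan is a $2$-adic stationary phase argument in the style of Sali\'e: split $h$ into a coarse and a fine part, sum over the fine part exactly, and keep only the few coarse classes that satisfy a congruence. Throughout, $\lambda\ge 6$ gives $\nu\ge3$, so $8\mid 2^\nu$. Since the conductor of $\chi$ divides $8$, $\chi(x+2^\nu y)=\chi(x)$ for every $y$, and the character passes through the splitting untouched.

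\emph{Step 1 (vanishing).} Write $h=x+2^{\mu}y$ with $x$ running over reduced residues modulo $2^{\mu}$ and $y$ modulo $2^{\lambda-\mu}$; take $\mu=\nu$ for $\lambda=2\nu$ and $\mu=\nu+1$ for $\lambda=2\nu+1$. Then expand $h^{-1}=x^{-1}-2^\mu yx^{-2}+2^{2\mu}y^2x^{-3}-\cdots$ modulo $k$.
\begin{itemize}
\item For $\lambda=2\nu$ the expansion is exactly linear in $y$.
\item For $\lambda=2\nu+1$ the only surviving quadratic term is $2^{2\nu}y^2x^{-3}$. I will use $y^2\equiv y\pmod 2$ to make it linear as well; for the vanishing step it is enough to use the even-type decomposition with $\mu=\nu$ and modulus $2^{2\nu+1}$, which still forces the congruence below modulo $2^{\nu}$.
\end{itemize}
Summing the additive character over $y$ yields $2^{\lambda-\mu}$ times the indicator of $a\equiv x^{-2}\pmod{2^{\nu}}$ (up to the extra correction term in the odd case). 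Since $\nu\ge3$, this congruence forces $a$ odd and $a\equiv1\pmod 8$. Hence $S_\chi(a,1;k)=0$ otherwise.

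\emph{Step 2 (reduction to $\theta$).} For $a\equiv\theta^2\pmod k$, substitute $h\mapsto\theta^{-1}h$. This gives $S_\chi(a,1;k)=\overline{\chi}(\theta)\sum_h\chi(h)\,e\bigl(\theta(h+h^{-1})/k\bigr)$, which is the identity already quoted after Propositions~\ref{prop:acuadradop}--\ref{prop:s1-computation}.

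\emph{Step 3, even case $\lambda=2\nu$.} Use $h=x+2^\nu y$ as in Step 1. The $y$-sum equals $2^\nu$ when $x^2\equiv1\pmod{2^\nu}$ and vanishes otherwise, so $x\in\{\pm1,\pm1+2^{\nu-1}\}$. For each of the four classes I compute $x+x^{-1}\pmod{2^{2\nu}}$ using the geometric series for $(1+2^{\nu-1})^{-1}$. The terms of order $2^{3\nu-3}$ vanish because $3\nu-3\ge2\nu$ when $\nu\ge3$. This gives
\begin{itemize}
\item $x+x^{-1}\equiv\pm2$ for $x=\pm1$;
\item $x+x^{-1}\equiv\pm(2+2^{2\nu-2})$ for $x=\pm1+2^{\nu-1}$.
\end{itemize}
The corresponding phases are $\omega$, $\overline\omega$, $\omega i^\theta$, and $\overline\omega i^{-\theta}=-\overline\omega i^{\theta}$, the last because $\theta$ is odd. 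Multiplying by $\chi(x)$ reproduces formula (1).

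\emph{Step 4, odd case $\lambda=2\nu+1$ (main obstacle).} Take $x$ modulo $2^{\nu+1}$ and $y$ modulo $2^{\nu}$, with $h=x+2^{\nu+1}y$; alternatively take $y$ modulo $2^{\nu+1}$ with $h=x+2^\nu y$ and linearize with $y^2\equiv y$. The resulting condition is a perturbed congruence of the form $x^2\equiv 1+2^{\nu}(\text{odd})\pmod{2^{\nu+1}}$, whose solutions are again near $\pm1$ and $\pm1+2^{\nu-1}$. I then evaluate $\theta(x+x^{-1})$ modulo $2^{2\nu+1}$ for these solutions. Half-integral powers of $i$ appear as $e(\theta\cdot 2^{2\nu-2}c/2^{2\nu+1})=(\sqrt i)^{\theta c}$, and collecting them should produce the exponents $t$ and $s$. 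The delicate point is that for $\nu=3$ the cubic and higher terms of the inverse expansion, of order $2^{3\nu-3}=2^{2\nu}$, no longer vanish modulo $2^{2\nu+1}$. This is exactly why $s=5$, $t=3$ there, instead of $s=1$, $t=-1$. I would do the generic $\nu\ge4$ bookkeeping first and then redo the $\nu=3$ expansion separately. As a consistency check I would verify the result numerically for $\lambda=7,8,9$.
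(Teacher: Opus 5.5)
Your argument is correct, and it does more than the paper does here. The paper's proof of this proposition is only a citation of \cite[Proposition~3.12]{AJNJ}. You instead give a self-contained Sali\'e-type $2$-adic stationary-phase computation: split $h=x+2^{\mu}y$, sum the linear phase in $y$ exactly, reduce to $S_\chi(\theta,\theta;k)$ via $h\mapsto\theta^{-1}h$, and evaluate $\theta(x+x^{-1})$ on the surviving classes. The citation buys brevity and consistency with the small-$\lambda$ cases of \cite[Proposition~3.9]{AJNJ}. Your route shows exactly where $\lambda\geq6$ is used: $\nu\geq3$ gives $8\mid2^\nu$, so $\chi$ is constant on the fibres; it makes $x^2\equiv1\pmod{2^\nu}$ have the four roots $\pm1,\pm1+2^{\nu-1}$; and it gives $3\nu-3\geq2\nu$ in the even case. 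It also explains the exceptional exponents at $\nu=3$. I checked that the term $\mp2^{3\nu-3}=\mp2^{2\nu}$ contributes exactly a factor $e^{\pi i\theta}=-1$ there. For odd $\theta$ one also has $(\sqrt i)^{\theta t}+i^{-\theta}(\sqrt i)^{\theta s}=2(\sqrt i)^{\theta t}$ in both regimes, so formula (2) comes out as you predict. Your even-case computation in Step~3 reproduces formula (1) exactly.

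One detail in Step~4 needs correcting. With $h=x+2^\nu y$ and $y$ taken modulo $2^{\nu+1}$, the surviving condition is $x^2\equiv1+2^\nu\pmod{2^{\nu+1}}$. This condition is invariant under $x\mapsto x+2^\nu$, and its only solutions are $x\equiv\pm1+2^{\nu-1}\pmod{2^\nu}$. The classes near $\pm1$ do not survive, because $(\pm1+2^\nu z)^2\equiv1\pmod{2^{\nu+1}}$. Each survivor carries weight $2^{\nu+1}$, which accounts for the factor $2$ in (2).

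With the other splitting $h=x+2^{\nu+1}y$, you get eight classes modulo $2^{\nu+1}$ with $x^2\equiv1\pmod{2^\nu}$. The pairs $x$, $x+2^\nu$ near $\pm1$ cancel, because $x+x^{-1}$ shifts by $\pm2^{2\nu}$, contributing the factor $e^{\pi i\theta}=-1$. The pairs near $\pm1+2^{\nu-1}$ have equal phases and add. Either way no $\chi(\pm1)$ terms remain. If you carried $x=\pm1$ along as survivors, as your sentence suggests, you would pick up spurious $\chi(\pm1)\,\omega^{\pm1}$ terms that are absent from (2).

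A smaller point: in Step~1 the splitting with $\mu=\nu+1$ has no quadratic term at all. It already gives the vanishing directly, so the $y^2\equiv y$ linearization is only needed if you use $\mu=\nu$.
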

\begin{proof}
    See \cite[Proposition 3.12]{AJNJ}.
\end{proof}

\section{A recurrence for $p_\alpha(n)$}
\label{sect:recursive}
Before turning to the analytic estimates, we record the exact recurrence used for independent verification of the computed coefficients.
\begin{proposition}
\label{prop:t-colored-sigma-recurrence}
The coefficients $p_\alpha(n)$ satisfy $p_\alpha(0)=1$ and, for every $n\geq1$,
\begin{equation}
\label{eq:t-colored-sigma-recurrence}
p_\alpha(n)
=
\frac{\alpha}{n}
\sum_{k=1}^{n}
\sigma(k)\,p_\alpha(n-k),
\end{equation}
where \(\sigma(k):=\sum_{d\mid k}d\).

\end{proposition}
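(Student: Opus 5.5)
The plan is to use logarithmic differentiation of the generating function. Set
\[
F(q):=\sum_{n\geq0}p_\alpha(n)q^n=\prod_{j\geq1}(1-q^j)^{-\alpha},
\]
viewed as a formal power series in \(\mathbb{Q}[[q]]\) (or as an analytic function for \(|q|<1\)). The constant term of the product is \(1\), so \(p_\alpha(0)=1\). Since \(F\) has constant term \(1\), its formal logarithm is well defined:
\[
\log F(q)=-\alpha\sum_{j\geq1}\log(1-q^j)=\alpha\sum_{j\geq1}\sum_{r\geq1}\frac{q^{jr}}{r}.
\]

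Next, apply the operator \(q\,\frac{d}{dq}\) to both sides. This gives
\[
q\frac{F'(q)}{F(q)}=\alpha\sum_{j,r\geq1}j\,q^{jr}.
\]
Grouping the terms by \(k=jr\), the coefficient of \(q^k\) on the right is \(\alpha\sum_{j\mid k}j=\alpha\sigma(k)\). Therefore
\[
qF'(q)=\alpha\Bigl(\sum_{k\geq1}\sigma(k)q^k\Bigr)F(q).
\]

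Comparing coefficients of \(q^n\) for \(n\geq1\), the left side contributes \(n\,p_\alpha(n)\), and the right side contributes \(\alpha\sum_{k=1}^{n}\sigma(k)p_\alpha(n-k)\). Dividing by \(n\) gives \eqref{eq:t-colored-sigma-recurrence}.

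The only point needing care is justifying the formal logarithm and derivative manipulations for real (or integral) \(\alpha\). Two routes work. One can work in \(\mathbb{Q}[[q]]\), or \(\mathbb{R}[[q]]\), where \(\log\) and \(\exp\) are mutually inverse bijections between series with constant terms \(0\) and \(1\), and where \((1-q^j)^{-\alpha}\) is defined as \(\exp(-\alpha\log(1-q^j))\). Alternatively, one can argue analytically for \(|q|<1\): the product and the double series converge absolutely and locally uniformly, so termwise differentiation is permitted. Beyond this, the argument is routine.
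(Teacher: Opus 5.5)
Your logarithmic-differentiation argument is correct: it gives $qF'(q)=\alpha\bigl(\sum_{k\geq1}\sigma(k)q^k\bigr)F(q)$, and comparing coefficients yields the recurrence for every real $\alpha>0$. The paper does not prove this itself; it only cites Gandhi's formula (2.7), which is this same standard identity, so your argument is the usual derivation written out in full.
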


\begin{proof}
See Gandhi~\cite[(2.7)]{gandhi1963congruences}.
\end{proof}

\begin{remark}
After precomputing the divisor sums $\sigma(1),\ldots,\sigma(N)$, we use the recurrence above to compute $p_\alpha(1),\ldots,p_\alpha(N)$ successively, starting from $p_\alpha(0)=1$. These values provide an independent exact check of the coefficients computed from the Rademacher expansion.
\end{remark}

\section{Upper bounds and truncation error}\label{ordgro}
In this section, we derive an explicit upper bound for $p_\alpha(n)$ to guide the choice of numerical precision. We also recall a truncation-error estimate for the Rademacher-type series, which will be used to choose the summation limits. We begin with the asymptotic formula of Iskander, Jain, and Talvola \cite[Corollary~1.2]{IJT20}.
\begin{proposition}\label{cor:asymptotic-pa}
For all \(\alpha>0\), as \(n\to\infty\), we have
\begin{align}
p_\alpha(n)
\sim
2\pi
\frac{
I_{s_\alpha}\!\left(\frac{\pi\alpha}{6}\lambda_\alpha(n)\right)
}{
\lambda_\alpha(n)^{s_\alpha}
}
\sim
\sqrt{\frac{12}{\alpha}}\,
\frac{
e^{\frac{\alpha\pi}{6}\lambda_\alpha(n)}
}{
\lambda_\alpha(n)^{s_\alpha+\frac{1}{2}}
},
\end{align}
where
\begin{align}
\lambda_\alpha(n):=\sqrt{\frac{24n}{\alpha}-1}.
\end{align}
\end{proposition}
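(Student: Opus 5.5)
The statement is Proposition~\ref{cor:asymptotic-pa}, and I plan to derive it from the exact formula of Theorem~\ref{thm:exact-formula}. The approach is to isolate the dominant term, namely \(m=0\), \(k=1\), and show that everything else is exponentially smaller. Then Bessel asymptotics give the second form.

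\textbf{Step 1: the main term.} Take \(m=0\) and \(k=1\). We have \(A_1^{(\alpha)}(n,0)=1\), \(p_\alpha(0)=1\), and \(\mu_\alpha(0)=\sqrt{\alpha/24}\). So this term equals
\[
2\pi\,\nu_\alpha(n)^{-s_\alpha}(\alpha/24)^{s_\alpha/2}\, I_{s_\alpha}\bigl(4\pi\nu_\alpha(n)\sqrt{\alpha/24}\bigr).
\]
Since \(\nu_\alpha(n)=\sqrt{\alpha/24}\,\lambda_\alpha(n)\), we get \(4\pi\nu_\alpha(n)\sqrt{\alpha/24}=\frac{\pi\alpha}{6}\lambda_\alpha(n)\). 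Also \(\nu_\alpha(n)^{-s_\alpha}(\alpha/24)^{s_\alpha/2}=\lambda_\alpha(n)^{-s_\alpha}\). So the main term is exactly \(2\pi I_{s_\alpha}(\frac{\pi\alpha}{6}\lambda_\alpha(n))/\lambda_\alpha(n)^{s_\alpha}\).

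\textbf{Step 2: the remaining terms.} Use the trivial bound \(|A_k^{(\alpha)}(n,m)|\le k\). Each remaining term has Bessel argument \(x_{k,m}=\frac{4\pi}{k}\nu_\alpha(n)\mu_\alpha(m)\). This is at most \(\max\bigl(\tfrac12\cdot\tfrac{\pi\alpha}{6}\lambda_\alpha(n),\,4\pi\nu_\alpha(n)\mu_\alpha(1)\bigr)\), which is at most \(c\,x_{1,0}\) for a fixed \(c<1\), because \(\mu_\alpha(m)<\mu_\alpha(0)\) for \(m\ge1\).

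For the tail in \(k\), split at \(k\le K\) and \(k>K\). For large \(k\), use \(I_\nu(x)\le (x/2)^\nu e^{x}/\Gamma(\nu+1)\), which gives a summable bound of order \(k^{-s_\alpha}\) times a power of \(n\). Since \(s_\alpha>1\), the series \(\sum_k \tfrac{2\pi}{k}\,k\,I_{s_\alpha}(x_{k,m})\) converges. Its total is \(O(n^{C}e^{c x_{1,0}})\).

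There are only finitely many values of \(m\), namely \(m\le M_\alpha\). So the entire remainder is \(O(n^{C}e^{c\,x_{1,0}})\).

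\textbf{Step 3: conclusion.} The main term is \(\asymp e^{x_{1,0}}x_{1,0}^{-1/2}\lambda_\alpha^{-s_\alpha}\). The remainder is exponentially smaller, which gives the first \(\sim\).

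For the second \(\sim\), use \(I_\nu(x)\sim e^x/\sqrt{2\pi x}\). This gives
\[
2\pi\,\frac{e^{x}}{\sqrt{2\pi x}}=\sqrt{\frac{2\pi}{x}}\,e^{x}\quad\text{with } x=\tfrac{\pi\alpha}{6}\lambda_\alpha(n),
\]
and \(\sqrt{2\pi\cdot 6/(\pi\alpha)}=\sqrt{12/\alpha}\). This yields the stated form with the extra factor \(\lambda_\alpha(n)^{-1/2}\).

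\textbf{Main obstacle.} The delicate point is Step 2: making the \(k\)-tail uniform, so that the sum over \(k\) does not destroy the exponential gap. I would handle it by using the small-argument Bessel bound for \(k>x_{1,0}\) and the crude exponential bound for \(k\le x_{1,0}\). This costs at most a polynomial factor in \(n\), which the gap absorbs.
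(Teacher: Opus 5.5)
Your derivation is correct. The paper gives no proof of its own and simply quotes \cite[Corollary~1.2]{IJT20}, which follows from Theorem~\ref{thm:exact-formula} in exactly your way: the $(m,k)=(0,1)$ term equals $2\pi I_{s_\alpha}\bigl(\frac{\pi\alpha}{6}\lambda_\alpha(n)\bigr)/\lambda_\alpha(n)^{s_\alpha}$ identically, and every other term has Bessel argument at most $c\,\frac{\pi\alpha}{6}\lambda_\alpha(n)$ for a fixed $c<1$.

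Your remainder estimates mirror those in the proof of Proposition~\ref{upperbound}. The step you flag as the main obstacle is in fact routine and needs no splitting in $k$. The bound $I_\nu(x)\le (x/2)^\nu e^x/\Gamma(\nu+1)$ (valid since $\nu>-1/2$) can be applied to all remaining pairs $(m,k)$ at once. Because $\sum_k k^{-s_\alpha}<\infty$, this bounds the whole remainder by $O\bigl(n^{s_\alpha/2}e^{c\frac{\pi\alpha}{6}\lambda_\alpha(n)}\bigr)$.
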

We complement this asymptotic formula with an explicit
upper bound.
\begin{proposition}\label{upperbound}
Let $\alpha>0$ and $n>\alpha/24$. Then 
\[
p_\alpha(n)\le D_\alpha \nu_\alpha(n)^{-\alpha/2} \exp\left( 2\pi \nu_\alpha(n) \sqrt{\frac{\alpha}{6}}\right),
\]
where
\[
D_\alpha = 8\pi^2 \left(1+\frac{e^{1/4}}{2^{s_\alpha}\Gamma\left(s_\alpha+1\right)}\left(1+\frac{2}{\alpha}\right)\right) \sum_{m=0}^{M_\alpha}\mu_\alpha(m)^{s_\alpha+1}p_\alpha(m).
\]
\end{proposition}

\begin{proof}
By the Rademacher-type formula,
\[
p_\alpha(n) = \nu_\alpha(n)^{-s_\alpha} \sum_{m=0}^{M_\alpha} \mu_\alpha(m)^{s_\alpha}p_\alpha(m)\sum_{k=1}^{\infty}\frac{2\pi}{k}A_k^{(\alpha)}(n,m)I_{s_\alpha}\left(\frac{4\pi}{k}\nu_\alpha(n)\mu_\alpha(m)\right).
\]
Taking absolute values and using
\[
\left|A_k^{(\alpha)}(n,m)\right|\le k,
\]
we obtain
\[
0\leq p_\alpha(n)\le 2\pi\nu_\alpha(n)^{-s_\alpha}\sum_{m=0}^{M_\alpha}\mu_\alpha(m)^{s_\alpha}p_\alpha(m)\sum_{k=1}^{\infty} I_{s_\alpha}\left(\frac{4\pi}{k}\nu_\alpha(n)\mu_\alpha(m)\right).
\]

Fix $m$ and set $B=4\pi\nu_\alpha(n)\mu_\alpha(m)$. If $B=0$, the corresponding summand vanishes. We may therefore assume $B>0$ and estimate the inner sum separately over $k\leq B$ and $k>B$.

For \(k\le B\), using \(I_\nu(x)\le e^x\) (see
\cite[\S10.37 and (10.32.1)]{NIST:DLMF}), we have
\[
\sum_{k\le B} I_{s_\alpha}\left(\frac{B}{k}\right)\le B e^{B}.
\]

For \(k>B\), the argument satisfies \(B/k<1\). Using the series expansion
in \cite[(10.25.2)]{NIST:DLMF}, we have
\[
I_\nu(x)\le\frac{e^{1/4}}{\Gamma(\nu+1)}\left(\frac{x}{2}\right)^\nu,\qquad 0<x\le 1.
\]
Taking \(\nu=s_\alpha>1\), we obtain
\begin{align*}
\sum_{k>B}
I_{s_\alpha}\left(\frac{B}{k}\right)&\le\frac{e^{1/4}}{2^{s_\alpha}\Gamma\left(s_\alpha+1\right)}B^{s_\alpha}\sum_{k>B}\frac{1}{k^{s_\alpha}}.
\end{align*}
For \(B\ge 1\), the integral test gives
\[
\sum_{k>B}\frac{1}{k^{s_\alpha}}\le B^{-s_\alpha}+\int_B^\infty x^{-s_\alpha}\,dx=B^{-s_\alpha}+\frac{2}{\alpha}B^{-\alpha/2}.
\]
Consequently,
\[
B^{s_\alpha}\sum_{k>B}\frac{1}{k^{s_\alpha}}\le 1+\frac{2}{\alpha}B \le\left(1+\frac{2}{\alpha}\right)B.
\]
For \(0<B<1\), we use instead
\[
\sum_{k>B}\frac{1}{k^{s_\alpha}}\le\sum_{k=1}^{\infty}\frac{1}{k^{s_\alpha}}\le 1+\frac{2}{\alpha}.
\]
Since \(B^{s_\alpha}\le B\) for \(0<B<1\), the two cases imply
\[
B^{s_\alpha}\sum_{k>B}\frac{1}{k^{s_\alpha}}\le\left(1+\frac{2}{\alpha}\right)B.
\]
Therefore
\[
\sum_{k>B}I_{s_\alpha}\left(\frac{B}{k}\right)\le\frac{e^{1/4}}{2^{s_\alpha}\Gamma\left(s_\alpha+1\right)}\left(1+\frac{2}{\alpha}\right)B.
\]
Hence
\[
\sum_{k=1}^{\infty}I_{s_\alpha}\left(\frac{B}{k}\right)\le B e^B+\frac{e^{1/4}}{2^{s_\alpha}\Gamma\left(s_\alpha+1\right)}\left(1+\frac{2}{\alpha}\right)B.
\]
Since \(e^B\ge 1\), this gives
\[
\sum_{k=1}^{\infty} I_{s_\alpha}\left(\frac{B}{k}\right)\le B e^B\left(1+\frac{e^{1/4}}{2^{s_\alpha}\Gamma\left(s_\alpha+1\right)}\left(1+\frac{2}{\alpha}\right)\right).
\]
Using
\[
B = 4\pi\nu_\alpha(n)\mu_\alpha(m) \le 4\pi\nu_\alpha(n)\sqrt{\frac{\alpha}{24}}=2\pi\nu_\alpha(n)\sqrt{\frac{\alpha}{6}},
\]
we get
\[ 
B e^B\le 4\pi\nu_\alpha(n)\mu_\alpha(m) \exp\left(2\pi\nu_\alpha(n)\sqrt{\frac{\alpha}{6}}\right).
\]
Substituting this back, we obtain the desired inequality with
\(D_\alpha\) as claimed.
\end{proof}


We next turn to the error introduced by truncating the Rademacher-type series.
\begin{definition}\label{def:truncated-rademacher}
Let $\alpha>0$, let $n>\alpha/24$ be an integer, and let $\delta>0$. Define the truncated Rademacher sum by
\begin{equation}\label{approximation}
p_\alpha(n;\delta)
:=
\nu_\alpha(n)^{-s_\alpha}\sum_{m=0}^{M_\alpha} \mu_\alpha(m)^{s_\alpha}p_\alpha(m)\sum_{1\leq k<\frac{2\pi}{\delta}\mu_\alpha(m)}\frac{2\pi}{k}A_k^{(\alpha)}(n,m)I_{s_\alpha}\left(\frac{4\pi}{k}\nu_\alpha(n)\mu_\alpha(m)\right).
\end{equation}
\end{definition}
As $\delta\to0^+$, we have $p_\alpha(n;\delta)\to p_\alpha(n)$. The following estimate is due to Iskander, Jain, and Talvola \cite[Theorem~4.1]{IJT20}.

\begin{proposition}\label{tail-bound}
Let \(\alpha>0\), \(0<\delta<2\pi\mu_\alpha(0)\), and
\(n>\alpha/24\). Then
\[
\left|p_\alpha(n)-p_\alpha(n;\delta)\right| < \frac{C_\alpha}{\delta}\frac{I_{s_\alpha}\!\left(2\delta\nu_\alpha(n)\right)}{\nu_\alpha(n)^{s_\alpha}},
\]
where
\[
C_\alpha := 4\pi^2 \left(1+\frac{2}{\alpha}\right)\mu_\alpha(0) \sum_{m=0}^{M_\alpha}\mu_\alpha(m)^{s_\alpha}p_\alpha(m).
\]
\end{proposition}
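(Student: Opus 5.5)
The tail $p_\alpha(n)-p_\alpha(n;\delta)$ is exactly the part of \eqref{eq:exact-formula} with $k\ge \frac{2\pi}{\delta}\mu_\alpha(m)$ for each $m$. I would bound it termwise using only the trivial estimate $|A_k^{(\alpha)}(n,m)|\le k$ already used in Proposition~\ref{upperbound}. This gives
\[
|p_\alpha(n)-p_\alpha(n;\delta)|\le 2\pi\,\nu_\alpha(n)^{-s_\alpha}\sum_{m=0}^{M_\alpha}\mu_\alpha(m)^{s_\alpha}p_\alpha(m)\sum_{k\ge K_m} I_{s_\alpha}\!\left(\frac{4\pi}{k}\nu_\alpha(n)\mu_\alpha(m)\right),
\]
where $K_m=\frac{2\pi}{\delta}\mu_\alpha(m)$. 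Terms with $\mu_\alpha(m)=0$ vanish, so I may assume $\mu_\alpha(m)>0$. For $k\ge K_m$ the Bessel argument satisfies $x_k:=\frac{4\pi}{k}\nu_\alpha(n)\mu_\alpha(m)\le 2\delta\nu_\alpha(n)=:X$.

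The key step is an inequality controlling $I_\nu(x)$ for $x\le X$ by $I_\nu(X)$ times a power saving. From the series definition, $I_\nu(x)/x^\nu$ is increasing in $x>0$, so
\[
I_\nu(x)\le (x/X)^\nu I_\nu(X)\qquad (0<x\le X).
\]
With $\nu=s_\alpha$ this gives $I_{s_\alpha}(x_k)\le (K_m/k)^{s_\alpha}I_{s_\alpha}(X)$. Therefore the inner sum is at most $I_{s_\alpha}(X)\,K_m^{s_\alpha}\sum_{k\ge K_m}k^{-s_\alpha}$.

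Next I would estimate this sum exactly as in Proposition~\ref{upperbound}. When $K_m\ge1$, the integral test gives $K_m^{s_\alpha}\sum_{k\ge K_m}k^{-s_\alpha}\le 1+\frac{2}{\alpha}K_m\le(1+\frac2\alpha)K_m$. This is the step I expect to be the main obstacle, because it needs $K_m\ge1$. When $K_m<1$, the sum runs over all $k\ge1$ and is at most $\zeta(s_\alpha)\le 1+2/\alpha$, but the factor $K_m^{s_\alpha}$ is then no longer $\ge K_m$, so the bound must be arranged differently. In that regime I would instead bound $I_{s_\alpha}(x_k)\le (x_k/X)^{s_\alpha}I_{s_\alpha}(X)$ only for $k\ge1$ with $x_k\le X$, which may fail for small $k$. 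So the hypothesis $\delta<2\pi\mu_\alpha(0)$ should be used to guarantee $K_0>1$. For $m\ge1$ where $K_m$ might be smaller, I would replace $K_m$ by the larger quantity $K_0$ in the final estimate, which still suffices since only an upper bound is needed. Splitting at $\max(K_m,1)$, or simply using $K_m\le K_0=\frac{2\pi}{\delta}\mu_\alpha(0)$, I would aim for the uniform inner bound $(1+\frac2\alpha)\frac{2\pi}{\delta}\mu_\alpha(0)\,I_{s_\alpha}(X)$.

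Finally, substituting this inner bound into the first display gives
\[
2\pi\cdot\frac{2\pi}{\delta}\left(1+\frac2\alpha\right)\mu_\alpha(0)\sum_m\mu_\alpha(m)^{s_\alpha}p_\alpha(m)\cdot\frac{I_{s_\alpha}(2\delta\nu_\alpha(n))}{\nu_\alpha(n)^{s_\alpha}},
\]
which is $\frac{C_\alpha}{\delta}\,I_{s_\alpha}(2\delta\nu_\alpha(n))/\nu_\alpha(n)^{s_\alpha}$. Strict inequality follows because the integral test comparison is strict. Alternatively, the whole statement may simply be cited from \cite[Theorem~4.1]{IJT20}.
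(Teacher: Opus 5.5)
The paper gives no proof of this statement; it quotes \cite[Theorem~4.1]{IJT20}. Your argument is a correct, self-contained replacement for that citation. It uses the same ingredients as the paper's proof of Proposition~\ref{upperbound}: the trivial bound $|A_k^{(\alpha)}(n,m)|\le k$ and the integral test. It adds one input, the monotonicity of $I_\nu(x)/x^\nu$, which lets you compare every tail term with the single value $I_{s_\alpha}(2\delta\nu_\alpha(n))$ through the identity $x_k/X=K_m/k$. Your route makes the constant transparent and gives a slightly sharper result. Before you replace $K_m$ by $K_0$, you obtain the bound with $4\pi^2(1+\frac{2}{\alpha})\sum_m\mu_\alpha(m)^{s_\alpha+1}p_\alpha(m)$ in place of $C_\alpha$, and this quantity is at most $C_\alpha$. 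Strictness comes, as you say, from the strict integral comparison in the $m=0$ term, where $\mu_\alpha(0)^{s_\alpha}p_\alpha(0)I_{s_\alpha}(X)>0$.

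Your treatment of the case $K_m<1$ is muddled, but this is not a real gap. You worry that $K_m^{s_\alpha}$ is ``no longer $\ge K_m$'' and that $x_k\le X$ ``may fail for small $k$.'' Neither is a problem.
\begin{itemize}
\item You need an upper bound, so $K_m^{s_\alpha}\le K_m$ for $0<K_m<1$ is exactly the right direction. This is the same observation $B^{s_\alpha}\le B$ used in the proof of Proposition~\ref{upperbound}.
\item When $K_m<1$, every $k\ge1$ satisfies $k\ge K_m$, hence $x_k\le X$ for all terms.
\end{itemize}
Therefore $K_m^{s_\alpha}\sum_{k\ge K_m}k^{-s_\alpha}\le(1+\frac{2}{\alpha})K_m$ holds for every $K_m>0$, and the uniform bound follows from $K_m\le K_0$. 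Your fallback also works: use $K_0>1$ to absorb $\zeta(s_\alpha)\le 1+\frac{2}{\alpha}$. It is unnecessary, though. Along your route the hypothesis $\delta<2\pi\mu_\alpha(0)$ is not needed for the inequality; it only guarantees that the truncated sum is nonempty.
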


\begin{remark}
\label{rmk:choice-delta}
Fix a truncation-error tolerance $\epsilon>0$ and
choose $0<\delta<2\pi\mu_\alpha(0)$ so that the bound in Proposition~\ref{tail-bound} is less than $\epsilon$. Since this bound is strictly increasing in $\delta$ and tends to zero as $\delta\to0^+$, an admissible value of $\delta$ always exists and can be found by bisection. To reduce the number of terms, we choose $\delta$ close to the supremum of the admissible values. For this choice of $\delta$, define
\begin{equation}
\label{eqn:Km}
K_m:=\max\left\{0,\,\left\lceil\frac{2\pi\mu_\alpha(m)}{\delta}\right\rceil-1\right\},\qquad 0\leq m\leq M_\alpha.
\end{equation}
The restriction on $k$ in~\eqref{approximation} is then equivalent to $1\leq k\leq K_m$.
\end{remark}

\section{Algorithms}
\label{sect:algorithms}

\SetKwComment{Comment}{/* }{ */}
\SetKw{KwReturn}{return}
\SetKwInput{KwData}{Input}
\SetKwInput{KwResult}{Output}

We now combine the arithmetic reductions of Sections~\ref{multisec}--\ref{KloosEval} with the analytic estimates of Section~\ref{ordgro} to obtain Algorithm~\ref{alg:colored-partitions} for the exact computation of $p_\alpha(n)$. We describe the evaluation of the truncated Hardy--Ramanujan--Rademacher expansion, including the computation of the $\alpha$-Kloosterman sums and the choice of working precision, and conclude with computational benchmarks.

Let $M_{\alpha}$ be as in~\eqref{eq:nu-mu}. For \(0\leq m\leq M_\alpha\) and \(k\geq1\), define
\begin{equation}
\label{eqn:tmk}
t_{m,k} := \mu_\alpha(m)^{s_\alpha}p_\alpha(m)\frac{2\pi}{k}A_k^{(\alpha)}(n,m) I_{s_\alpha}\left(\frac{4\pi}{k}\nu_\alpha(n)\mu_\alpha(m)\right).
\end{equation}

Let \(\delta\) and \(K_m\) be as in Remark \ref{rmk:choice-delta}. Then the approximation in \eqref{approximation} can be written as
\begin{equation}
\label{eqn:palpha-truncated-double-sum}
p_\alpha(n;\delta)=\nu_\alpha(n)^{-s_\alpha}\sum_{m=0}^{M_{\alpha}}\sum_{k=1}^{K_m}t_{m,k}.
\end{equation}

We also denote by
\begin{equation}
\label{eqn:Ldelta}
L_\delta:=\sum_{m=0}^{M_{\alpha}}K_m
\end{equation}
the total number of terms in the finite double sum.
Notice that the cutoff depends on \(m\). Since
\(\mu_\alpha(m)\) decreases with \(m\), the largest cutoff occurs
for \(m=0\).

The computation of \(A_k^{(\alpha)}(n,m)\) adapts the prime-power factorization framework used for the partition function \cite[Algorithm~2]{Joh12}, overpartitions \cite[Algorithm~8.1]{BSCVRS23}, and general eta-quotients \cite[Algorithm~7.1]{AJNJ}. The essential modification required by the full two-parameter family is that the factorization must propagate both \(n\) and \(m\).

\SetKwComment{Comment}{/* }{ */}
\SetKw{KwReturn}{return}
\SetKwInput{KwData}{Input}
\SetKwInput{KwResult}{Output}

\begin{algorithm}[ht]
\caption{Evaluation of $A_k^{(\alpha)}(n,m)$}
\label{alg:Ak}

\BlankLine

\KwData{Integers $\alpha\geq1$, $k\geq1$, and $n,m\in\mathbb Z$}

\KwResult{$A_k^{(\alpha)}(n,m)$, as defined in
\eqref{eq:alpha-kloosterman}}

\BlankLine

Factorize $k=p_1^{\lambda_1}\cdots p_j^{\lambda_j}$\;

$(n_3,m_3,k_3)\gets(n,m,k)$\;
$(s,i)\gets(1,1)$\;

\While{$s\neq0$ and $k_3\neq1$}{
    $(k_1,k_2)\gets
    \left(p_i^{\lambda_i},\,k_3/p_i^{\lambda_i}\right)$
    \label{step:choose}\;

    $(n_1,m_1),(n_2,m_2)\gets$
    Theorem~\ref{thm:multiplicativity}, applied to
    $A_{k_3}^{(\alpha)}(n_3,m_3)$ with $k_3=k_1k_2$
    \label{step:multiplicativity}\;

    $s_1\gets A_{k_1}^{(\alpha)}(n_1,m_1)$, using
    Theorem~\ref{prop:Ak-prime-power} and the formulas
    from Section~\ref{KloosEval}
    \label{step:primepow}\;

    $(n_3,m_3,k_3)\gets(n_2,m_2,k_2)$\;
    $(s,i)\gets(s\cdot s_1,i+1)$\;
}

\KwReturn{$s$}
\end{algorithm}

An implementation of the analogous procedure for overpartitions is available in \cite{code}. A related implementation for general eta-quotients was developed in connection with \cite{AJNJ}, but is not publicly available.

For the numerical evaluation of \(p_{\alpha}(n)\), we adapt Johansson’s termwise-precision strategy \cite[Theorem 4]{Joh12} to the truncated double sum indexed by \((m,k)\). Because the magnitude of a term depends on both indices, we assign an individual working precision to each pair \((m,k)\). To determine these precisions, define
\begin{equation}
\label{eqn:xmk}
x_{m,k}:=\frac{4\pi}{k}\nu_\alpha(n)\mu_\alpha(m).
\end{equation}
Using \(\left|A_k^{(\alpha)}(n,m)\right|\leq k\) and \(I_{s_\alpha}(x)\leq e^x
\quad (x>0),\)
we obtain 
\begin{equation}\label{eqn:Umk-definition}
    |t_{m,k}|\leq 2\pi\,\mu_\alpha(m)^{s_\alpha}p_\alpha(m)e^{x_{m,k}}=:U_{m,k}.
\end{equation}
We therefore define
\begin{equation}
\label{eqn:rmk}
r_{m,k}:=\max\left\{r_{\min},\left\lceil\log_2(4L_\delta)+\max\left\{
0,\,\log_2 U_{m,k}-s_\alpha\log_2\nu_\alpha(n)\right\}\right\rceil+g\right\},
\end{equation}
where \(r_{\min}\) is a fixed minimum precision and \(g\) is a fixed number of guard bits. We take \(r_{m,k}\) as the initial working precision. The numerical evaluation is performed using certified ball arithmetic, increasing the working precision if necessary until it returns an approximation \(\widehat{t}_{m,k}\) satisfying
\begin{equation}\label{eq:certified-term-error}
\left|t_{m,k}-\widehat{t}_{m,k}\right|<2^{-(r_{m,k}-g)}U_{m,k}.
\end{equation}

\begin{lemma}\label{lem:termwise-precision}
Let \(r_{m,k}\) be defined by \eqref{eqn:rmk}, and suppose that
\(\widehat{t}_{m,k}\) satisfies
\eqref{eq:certified-term-error}. Then
\[
\left|t_{m,k}-\widehat{t}_{m,k}\right|<\frac{\nu_\alpha(n)^{s_\alpha}}{4L_\delta}.
\]
\end{lemma}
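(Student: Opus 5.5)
The plan is to chain the certified error bound \eqref{eq:certified-term-error} with the lower bound on the precision coming from \eqref{eqn:rmk}. Throughout, write \(\nu=\nu_\alpha(n)\) and \(U=U_{m,k}\). By \eqref{eq:certified-term-error},
\[
\left|t_{m,k}-\widehat{t}_{m,k}\right|<2^{-(r_{m,k}-g)}U,
\]
so it suffices to prove \(2^{-(r_{m,k}-g)}U\le \nu^{s_\alpha}/(4L_\delta)\). Taking base-\(2\) logarithms, this is equivalent to
\[
r_{m,k}-g\ \ge\ \log_2(4L_\delta)+\log_2 U-s_\alpha\log_2\nu.
\]
Note that \(U>0\), since \(p_\alpha(m)\ge1\), \(\mu_\alpha(m)>0\) for \(m\le M_\alpha\) when \(\alpha/24\notin\mathbb Z\), and \(e^{x}>0\). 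The degenerate case \(\mu_\alpha(m)=0\) gives \(t_{m,k}=0\) and \(U=0\), which I would treat separately: there \eqref{eq:certified-term-error} cannot hold strictly, so either such terms are simply omitted from the sum, or, because \(K_m=0\) in that case, no such pair \((m,k)\) actually occurs.

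Next I would use the definition of \(r_{m,k}\). Since it is a maximum that includes the second expression, we have
\[
r_{m,k}\ \ge\ \bigl\lceil \log_2(4L_\delta)+\max\{0,\log_2 U-s_\alpha\log_2\nu\}\bigr\rceil+g.
\]
Dropping the ceiling (since \(\lceil y\rceil\ge y\)) and using \(\max\{0,z\}\ge z\), this gives
\[
r_{m,k}-g\ \ge\ \log_2(4L_\delta)+\log_2 U-s_\alpha\log_2\nu,
\]
which is exactly the required inequality. Exponentiating then yields
\[
2^{-(r_{m,k}-g)}U\ \le\ \frac{\nu^{s_\alpha}}{4L_\delta\,U}\,U=\frac{\nu^{s_\alpha}}{4L_\delta},
\]
and combining this with the strict inequality from \eqref{eq:certified-term-error} gives the claimed bound.

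The argument is essentially bookkeeping, and the only delicate point is the sign and degeneracy issues. One is the possibility \(U_{m,k}=0\) (when \(24\mid\alpha\) and \(m=M_\alpha\)), handled as above. The other is that \(r_{\min}\) may dominate the maximum; this is harmless, since it only makes \(r_{m,k}\) larger and hence strengthens the bound.
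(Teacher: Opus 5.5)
Your proof is correct and follows the paper's argument: both combine \eqref{eq:certified-term-error} with the lower bound $r_{m,k}-g\ge\log_2(4L_\delta)+\max\{0,\log_2 U_{m,k}-s_\alpha\log_2\nu_\alpha(n)\}$, which comes from \eqref{eqn:rmk}. The only difference is that you discard the inner maximum via $\max\{0,z\}\ge z$, whereas the paper keeps it to get the slightly sharper intermediate bound $\min\{U_{m,k},\nu_\alpha(n)^{s_\alpha}\}/(4L_\delta)$; your remark on the degenerate case $\mu_\alpha(m)=0$, where $K_m=0$, is a correct extra detail.
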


\begin{proof}
Put \(V:=\nu_\alpha(n)^{s_\alpha}\). By the definition of \(r_{m,k}\),
\[
r_{m,k}-g\geq\log_2(4L_\delta)+\max\left\{0,\log_2\!\left(\frac{U_{m,k}}{V}\right)\right\}.
\]
Consequently,
\[
2^{-(r_{m,k}-g)}U_{m,k}\leq\frac{U_{m,k}}{4L_\delta\max\{1,U_{m,k}/V\}}=\frac{\min\{U_{m,k},V\}}{4L_\delta}\leq\frac{V}{4L_\delta}.
\]
Combining this inequality with
\eqref{eq:certified-term-error} proves the result.
\end{proof}

Since
\[
\log_2 U_{m,k}=\log_2(2\pi)+s_\alpha\log_2\mu_\alpha(m)+\log_2 p_\alpha(m)+x_{m,k}\log_2(e),
\]
the dominant contribution to \(r_{m,k}\) decreases proportionally to
\[
\frac{\nu_\alpha(n)\mu_\alpha(m)}{k}.
\]
Thus, the first terms in each \(k\)-sum are evaluated with higher precision, while the later terms require substantially fewer bits. Combining the termwise error bounds with the truncation estimate gives the following criterion for recovering
$p_\alpha(n)$ exactly.

\begin{proposition}
\label{prop:numerical-error}
Let $\delta$ be chosen so that the bound in
Proposition~\ref{tail-bound} satisfies
\[
\left|p_\alpha(n)-p_\alpha(n;\delta)\right|<\frac14.
\]
For each pair $(m,k)$ occurring in $p_\alpha(n;\delta)$, let
$\widehat t_{m,k}$ be an approximation of $t_{m,k}$ satisfying
\[
\left|t_{m,k}-\widehat t_{m,k}\right|<\frac{\nu_\alpha(n)^{s_\alpha}}{4L_\delta}.
\]
Then
\[
\left|
p_\alpha(n)-\nu_\alpha(n)^{- s_\alpha}\sum_{m=0}^{M_{\alpha}}\sum_{k=1}^{K_m}\widehat t_{m,k}\right|<\frac12.
\]
Consequently, $p_\alpha(n)$ is obtained by rounding the preceding approximation to the nearest integer.
\end{proposition}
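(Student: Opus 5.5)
The plan is a triangle inequality that splits the total error into a truncation part and a rounding part. Write $V:=\nu_\alpha(n)^{s_\alpha}$ and $\widehat p:=V^{-1}\sum_{m=0}^{M_\alpha}\sum_{k=1}^{K_m}\widehat t_{m,k}$. By \eqref{eqn:palpha-truncated-double-sum} we have $p_\alpha(n;\delta)=V^{-1}\sum_{m}\sum_{k\le K_m}t_{m,k}$. Here we use Remark~\ref{rmk:choice-delta}: the range $1\le k<\frac{2\pi}{\delta}\mu_\alpha(m)$ in \eqref{approximation} is exactly $1\le k\le K_m$. Hence
\[
|p_\alpha(n)-\widehat p|\le |p_\alpha(n)-p_\alpha(n;\delta)|+V^{-1}\sum_{m=0}^{M_\alpha}\sum_{k=1}^{K_m}|t_{m,k}-\widehat t_{m,k}|.
\]

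The first term is $<\tfrac14$ by the hypothesis on $\delta$, which is justified by Proposition~\ref{tail-bound}. For the second term, the double sum contains exactly $L_\delta=\sum_m K_m$ summands by \eqref{eqn:Ldelta}. Each summand is $<V/(4L_\delta)$ by hypothesis, and such approximations are supplied by Lemma~\ref{lem:termwise-precision}. So the second term is at most $V^{-1}\cdot L_\delta\cdot V/(4L_\delta)=\tfrac14$. If $L_\delta=0$, the sum is empty, the second term vanishes, and only the truncation bound is needed.

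Adding the two bounds gives $|p_\alpha(n)-\widehat p|<\tfrac12$. One point needs care: this inequality must stay strict. It does, because the first bound is strict; moreover, when $L_\delta\ge1$ each termwise bound is also strict. Since $p_\alpha(n)$ is an integer, it is the unique integer within distance $<\tfrac12$ of $\widehat p$. This holds even if $\widehat p$ is complex, since one can take the real part, and the real part only moves closer to the integer $p_\alpha(n)$. Therefore rounding $\widehat p$ (or its real part) to the nearest integer returns $p_\alpha(n)$.

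There is no real obstacle in this argument. The only points to watch are that the index set of the truncated sum matches the count $L_\delta$ exactly, and the remark above about the real part.
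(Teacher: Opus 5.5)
Your proof is correct and takes the same approach as the paper, which gives no separate argument beyond ``combining the termwise error bounds with the truncation estimate'': the truncation error is $<\tfrac14$, the $L_\delta$ termwise errors together contribute $<\tfrac14$ after division by $\nu_\alpha(n)^{s_\alpha}$, and the triangle inequality gives $<\tfrac12$. Your extra points about the index range, strictness, and taking the real part are all sound, and the case $L_\delta=0$ never actually occurs, since $\delta<2\pi\mu_\alpha(0)$ forces $K_0\geq 1$.
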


The resulting procedure is summarized in Algorithm~\ref{alg:colored-partitions}.
\begin{algorithm}[ht]
\caption{Evaluation of $p_\alpha(n)$}
\label{alg:colored-partitions}

\KwData{Integers $\alpha\geq1$ and $n>\alpha/24$}
\KwResult{$p_\alpha(n)$}

\BlankLine

$M_\alpha\gets\lfloor\alpha/24\rfloor$

Choose \(\delta\) as in Remark~\ref{rmk:choice-delta}, with
\(\epsilon=1/4\).

\For{$m\gets0$ \KwTo $M_\alpha$}{
    $K_m\gets
    \max\left\{
    0,\,
    \left\lceil
    \frac{2\pi\mu_\alpha(m)}{\delta}
    \right\rceil-1
    \right\}$
}

$L_\delta\gets\sum_{m=0}^{M_\alpha}K_m$

Choose the accumulator precision using Proposition~\ref{upperbound}

$P\gets0$

\For{$m\gets0$ \KwTo $M_\alpha$}{
    \For{$k\gets1$ \KwTo $K_m$}{
        $A\gets A_k^{(\alpha)}(n,m)$ using Algorithm~\ref{alg:Ak}

        \If{$A\neq0$}{
            $U_{m,k}\gets$ the bound in
            \eqref{eqn:Umk-definition}

            $r_{m,k}\gets$ the precision in
            \eqref{eqn:rmk}

            $\widehat t_{m,k}\gets$
compute a certified approximation to $t_{m,k}$, starting with precision
$r_{m,k}$ and increasing the precision until~\eqref{eq:certified-term-error} is satisfied

            Convert $\widehat t_{m,k}$ to the accumulator precision

            $P\gets P+\widehat t_{m,k}$
        }
    }
}

$P\gets\nu_\alpha(n)^{-s_\alpha}P$

$P\gets$ round the real part of $P$ to the nearest integer

\KwReturn{$P$}
\end{algorithm}

\begin{remark}
The accumulator used for the finite double sum must have sufficient precision to represent the final value of \(p_\alpha(n)\), together with the required guard bits. Proposition~\ref{upperbound} provides an a priori bound for its magnitude. Individual terms may be evaluated with the smaller precisions \(r_{m,k}\) from \eqref{eqn:rmk}, and are then converted to the precision of the accumulator before summation.
Values with \(n\leq\alpha/24\), as well as other small values of
\(p_\alpha(n)\), are computed using the recursive formula in
Proposition~\ref{prop:t-colored-sigma-recurrence}.
\end{remark}

\subsection{Computational benchmarks}

We implemented the algorithms in SageMath 10.9. All computations were performed on a 2025 MacBook Air with an Apple M4 processor and 16 GB of memory, running macOS Sequoia 15.6; the reported times are elapsed wall-clock times.

For $\alpha=5$ and $n=10^6$, the computation took \(0.17\) seconds and produced a \(2478\)-digit integer satisfying
\[
p_5(10^6)\approx6.697755001499\times10^{2477},
\qquad
p_5(10^6)\equiv117931612608206581\pmod{10^{18}}.
\]
This corrects the numerical value reported in \cite[Example~6.7]{AJNJ}. The discrepancy was caused by an omitted parenthesis in the numerical implementation and by insufficient working precision; neither issue affects the underlying formula or the theoretical results of \cite{AJNJ}. The corrected value was obtained using the certified computation described above.

As a larger test with $\alpha>24$, the computation of
\(p_{100}(10^{10})\) took \(3504.83\) seconds (less than an hour) and
produced a \(1{,}113{,}767\)-digit integer satisfying
\[
p_{100}(10^{10})
\approx 4.233595920454\times10^{1113766},
\qquad
p_{100}(10^{10})
\equiv694713790859603336\pmod{10^{18}}.
\]
The code used for the computations is available in the accompanying repository \cite{mycode}.
\section{New congruences}\label{seccongru}
We now apply our algorithm for computing $p_\alpha(n)$ to the search for new congruences. Ryan, Scherr, Sirolli, and Treneer \cite[Theorems~4.2 and~4.3]{RSST21} give finite criteria for certifying congruences for the coefficients of eta-quotients. We specialize these criteria to colored partitions in Algorithm~\ref{alg:interesting} and use Algorithm~\ref{alg:colored-partitions} to carry out the required coefficient computations. This addresses the computational difficulty noted in \cite[Remark~5.2]{RSST21} and allows us to obtain further explicit examples. 

To apply these criteria, we first express the colored
partition numbers as Fourier coefficients of a modular
form. Let \(\alpha\geq 1\), $t=24/\gcd(\alpha,24)$ and $d=\alpha/\gcd(\alpha,24)$.
Then
\[
\frac{1}{\eta(tz)^{\alpha}}=\sum_{\substack{n\geq -d\\ n\equiv -d\pmod t}}p_{\alpha}\left(\frac{n+d}{t}\right)q^n\in M_{-\alpha/2}^{wh}\left(\Gamma_0(N_{\alpha}),\chi_{\alpha}\right),
\]
where \(N_{\alpha}:=\operatorname{lcm}(4,t^2),
\) and \(\chi_{\alpha}\) is the quadratic Dirichlet character given, for
\(\gcd(n,N_{\alpha})=1\), by
\[
\chi_{\alpha}(n)
=
\begin{cases}
\displaystyle
\left(\frac{(-1)^{\alpha/2}}{n}\right),
& \alpha \text{ even},\\[8pt]
\displaystyle
\left(\frac{12}{n}\right),
& \alpha \text{ odd and }3\nmid\alpha,\\[8pt]
1,
& \alpha \text{ odd and }3\mid\alpha.
\end{cases}
\]

This eta-quotient has weight \(-\alpha/2\). Thus it gives a half-integral weight example when \(\alpha\) is odd and an integral weight example when \(\alpha\) is even. 

We let $\ell > 2$ be a prime. Let us denote
\begin{equation}\label{eqn:kl}
	k_\ell := \begin{cases}
		24, & \ell = 3, \\
		\ell^2-1, & \ell \geq 5.
		\end{cases}
\end{equation}

Furthermore, given a non-zero integer $c$ with $c \mid N_{\alpha} \ell^2$ and $\ell^2
\nmid c$ we denote
\begin{equation}\label{eqn:vanishing_Fp}
	f_{c,\ell} :=
	\frac{N_{\alpha}}{\gcd(c^2,N_{\alpha})} \cdot
	\begin{cases}
		10   & \text{if } \ell = 3 \text{ and } \ell \nmid c, \\
		1   & \text{if }  \ell = 3 \text{ and } \ell \mid c, \\
		\tfrac{\ell^4-1}{24} & \text{if } \ell \geq 5 \text{ and } \ell\nmid c, \\
		\tfrac{\ell^2-1}{24} & \text{if } \ell \geq 5 \text{ and } \ell\mid c.
	\end{cases}
\end{equation}

The repeated occurrence of $N_\alpha$ reflects the level of the underlying eta-quotient.

Assume that $\ell\nmid N_\alpha$ and that
$f_\alpha(z):=\eta(tz)^{-\alpha}$ satisfies Condition~C at $\ell$ with associated value $\varepsilon_\ell$. Condition~C and an algorithm for verifying it for eta-quotients are given in \cite[Definition~2.1 and Algorithm~1]{RSST21}. For \(x\in\mathbb Q\), set
\[
a_\alpha(x):=p_\alpha\!\left(\frac{x+d}{t}\right),
\]
where \(p_\alpha(y)=0\) if \(y\notin\mathbb Z_{\geq0}\). For the integer $\kappa$ computed in Algorithm~\ref{alg:interesting}, define
\[
\mathcal H_Q(n):=
\begin{cases}
a_\alpha(Q^2n)
+\displaystyle\legendre{(-1)^{(\kappa-1)/2}n}{Q}
Q^{(\kappa-3)/2}a_\alpha(n)
+Q^{\kappa-2}a_\alpha(n/Q^2),
& \alpha\ \text{odd},\\[6pt]
a_\alpha(Qn)+Q^{\kappa-1}a_\alpha(n/Q),
& \alpha\ \text{even}.
\end{cases}
\]

Given an integer $j \geq 1$ we say that a prime $Q$ is a \emph{candidate} (for yielding congruences) if $Q \equiv -1 \pmod{N_{\alpha} \ell^j}$. A candidate prime is called \emph{interesting} if it yields the congruence described below.
\begin{algorithm}[ht]
\LinesNumbered

\KwData{Parameters \(\alpha,\ell,j,Q\) satisfying the
hypotheses above.}

\KwResult{\texttt{True} if the finite criterion certifies \(Q\); otherwise \texttt{False}.}

\BlankLine

\(
v_0\gets
\min_s\operatorname{ord}_s(f_\alpha),
\)
where \(s\) runs over the cusps of \(\Gamma_0(N_\alpha)\)\;

\(\beta\gets\) the least integer \(b\geq j-1\) such that
\[
-\alpha+\ell^b k_\ell>0
\quad\text{and}\quad
\ell^b f_{c,\ell}>-v_0
\]
for every cusp \(a/c\) of \(\Gamma_0(N_\alpha\ell^2)\) with
\(\ell^2\nmid c\)\;

\(\kappa\gets-\alpha+\ell^\beta k_\ell\)\;

\(n_0\gets\) the Sturm bound for
\[
S_{\kappa/2}
 \bigl(\Gamma_0(N_\alpha\ell^2),\chi_\alpha\bigr)
\]
given by \cite[Proposition~4.1]{RSST21}\;

\For{\(n\gets1\) \KwTo \(n_0\)}{
    \If{\label{step-imp}\(\ell\nmid n\), \(\legendre{n}{\ell}\neq\varepsilon_\ell\), and
    \(\mathcal H_Q(n)\not\equiv0\pmod{\ell^j}\)}{
        \KwReturn{\texttt{False}}\;
    }
}

\KwReturn{\texttt{True}}\;

\caption{Finite criterion for certifying interesting primes}
\label{alg:interesting}
\end{algorithm}

The following proposition is the specialization of
\cite[Theorems~4.2 and~4.3]{RSST21} to \(\eta(tz)^{-\alpha}\).
\begin{proposition}
\label{prop:interesting}
Let \(\alpha\) be a positive integer and \(\ell\) be an odd prime with
\(\gcd(\ell,N_{\alpha})=1\), let \(j\geq1\), and suppose that
Condition~C is satisfied with parameter
\(\varepsilon_{\ell}\).

Let \(Q\) be a prime satisfying \(Q\equiv-1\pmod{N_{\alpha}\ell^{j}}\) and define
\[
    r_{\alpha}=
    \begin{cases}
        1, & \text{if \(\alpha\) is even},\\
        3, & \text{if \(\alpha\) is odd}.
    \end{cases}
\]

If Algorithm~\ref{alg:interesting} returns \texttt{True} for
\((\alpha,\ell,j,Q)\), then
\[
    p_{\alpha}\!\left(
        \frac{Q^{r_{\alpha}}n+d}{t}
    \right)
    \equiv0\pmod{\ell^{j}},
\]
for every integer \(n\geq1\) satisfying \(\gcd(n,\ell Q)=1\), and \(\left(\frac{-n}{\ell}\right)\neq\varepsilon_{\ell}\).
\end{proposition}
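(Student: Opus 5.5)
The plan is to deduce Proposition~\ref{prop:interesting} directly from \cite[Theorems~4.2 and~4.3]{RSST21}, applied to $f_\alpha(z)=\eta(tz)^{-\alpha}\in M^{wh}_{-\alpha/2}(\Gamma_0(N_\alpha),\chi_\alpha)$. For odd $\alpha$ we use the half-integral weight version, which involves the Hecke operator $T_{Q^2}$. For even $\alpha$ we use the integral weight version, with $T_Q$. The work consists of checking that the hypotheses of those theorems are exactly what Algorithm~\ref{alg:interesting} verifies, and then translating the conclusion on the coefficients $a_\alpha(n)$ back into a statement about $p_\alpha$.

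First, we confirm the modularity data. The level $N_\alpha=\operatorname{lcm}(4,t^2)$ and the character $\chi_\alpha$ follow from the standard Gordon--Hughes--Newman criterion for eta-quotients. The order at each cusp gives $v_0$. The standing hypotheses $\ell\nmid N_\alpha$ and Condition~C with parameter $\varepsilon_\ell$ are assumed, as in \cite{RSST21}.

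In \cite{RSST21}, one multiplies $f_\alpha$ by a suitable power of an $\ell$-adically trivial form, such as $E_{\ell-1}^{\ell^\beta}$ or $\eta^{\ell^2}(z)/\eta(\ell^2 z)$ raised to an appropriate power. The resulting form of weight $\kappa/2=(-\alpha+\ell^\beta k_\ell)/2$ is congruent to $f_\alpha$ modulo $\ell^j$ and becomes holomorphic, indeed cuspidal after removing the coefficients in the residue classes excluded by Condition~C. This requires $\beta\geq j-1$, $\kappa>0$, and vanishing orders $\ell^\beta f_{c,\ell}+\operatorname{ord}>0$ at every cusp $a/c$ of $\Gamma_0(N_\alpha\ell^2)$ with $\ell^2\nmid c$. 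These are exactly the inequalities defining $\beta$ in the algorithm, with $f_{c,\ell}$ as in \eqref{eqn:vanishing_Fp}.

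Next, we apply the Hecke operator at $Q$ to the resulting cusp form $F$ in $S_{\kappa/2}(\Gamma_0(N_\alpha\ell^2),\chi_\alpha)$ (mod $\ell^j$). In the odd case we use $T_{Q^2}$, and in the even case $T_Q$. The standard coefficient formula for $F\mid T$ gives precisely $\mathcal H_Q(n)$ in the relevant residue classes. Note that $\chi_\alpha(Q)$ is absorbed into $\mathcal H_Q$ because $Q\equiv-1\pmod{N_\alpha}$, which is where that hypothesis enters. The Sturm bound $n_0$ of \cite[Proposition~4.1]{RSST21} then shows that if every coefficient with $n\leq n_0$ in these classes vanishes modulo $\ell^j$, the form is $\equiv0\pmod{\ell^j}$. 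This is the content of Theorem~4.2/4.3 of \cite{RSST21}, and it gives $\mathcal H_Q(n)\equiv0\pmod{\ell^j}$ for all $n$ with $\ell\nmid n$ and $\bigl(\frac{n}{\ell}\bigr)\neq\varepsilon_\ell$.

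Finally, we replace $n$ by $Qn$ in the even case, or $Q n$ in the odd case, with $\gcd(n,Q)=1$. Then $a_\alpha(Qn/Q)$ and $a_\alpha(Qn/Q^2)$ vanish or are controlled. In the odd case the middle Legendre-symbol term vanishes because $Q\mid Qn$, and $a_\alpha(Qn/Q^2)=0$ since $Q\nmid n$. Hence $a_\alpha(Q^{r_\alpha}n)\equiv0$, that is, $p_\alpha((Q^{r_\alpha}n+d)/t)\equiv0\pmod{\ell^j}$.

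We still need to reconcile the residue condition on $Qn$ with the stated condition on $n$. Since $Q\equiv-1\pmod\ell$, we have $\bigl(\frac{Q^{r_\alpha}n}{\ell}\bigr)=\bigl(\frac{-n}{\ell}\bigr)$, which yields exactly $\bigl(\frac{-n}{\ell}\bigr)\neq\varepsilon_\ell$.

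The main obstacle is this bookkeeping of normalizations. We must ensure that the exponent conventions, the character values at $Q$, and the sign conventions match those of \cite{RSST21}, including the $(-1)^{(\kappa-1)/2}$ twist and the power $Q^{(\kappa-3)/2}$. Only then is the finite check in Algorithm~\ref{alg:interesting} literally the hypothesis of their theorems. I would verify this by writing out the Shimura-type Hecke formula for weight $\kappa/2$ with nebentypus $\chi_\alpha$ and comparing it term by term with $\mathcal H_Q$.
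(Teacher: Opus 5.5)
Your route is the paper's. The paper gives no argument beyond declaring Proposition~\ref{prop:interesting} the specialization of \cite[Theorems~4.2 and~4.3]{RSST21} to $\eta(tz)^{-\alpha}$, and your account of how $\beta$, $\kappa$, $f_{c,\ell}$, the Sturm bound and the Hecke operators enter is what that specialization consists of. For odd $\alpha$ your final translation is also correct: $\mathcal H_Q(Qn)=a_\alpha(Q^3n)$ when $Q\nmid n$, and $\legendre{Qn}{\ell}=\legendre{-n}{\ell}$.

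For even $\alpha$, however, the last step fails as written, and this is a genuine gap.

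\emph{The substitution is wrong.} With $r_\alpha=1$ you must not substitute $Qn$: $\mathcal H_Q(Qn)=a_\alpha(Q^2n)+Q^{\kappa-1}a_\alpha(n)$, and the term you call $a_\alpha(Qn/Q)$ is $a_\alpha(n)$, which does not vanish. The right move is to use $\mathcal H_Q(n)$ itself with $Q\nmid n$. It equals $a_\alpha(Qn)$ because $n/Q\notin\mathbb Z$.

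\emph{The residue classes then do not match.} After this correction, your residue-class reconciliation no longer applies. The Legendre condition certified by the check sits on the argument $n$ of $\mathcal H_Q$, not on $Q^{r_\alpha}n$. Algorithm~\ref{alg:interesting} tests $\legendre{n}{\ell}\neq\varepsilon_\ell$, while the conclusion is asserted for $\legendre{-n}{\ell}\neq\varepsilon_\ell$. When $\ell\equiv3\pmod4$ these are complementary classes.

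\emph{The missing ingredient} is how the $\ell$-twist interacts with the Hecke operator. Let $g$ be the sum of the terms $a_\alpha(n)q^n$ of $f_\alpha$ with $\legendre{n}{\ell}=e$. Then $g\mid T_{Q^2}$ is supported on the same class. But $g\mid T_Q$ is supported on $\legendre{Qn}{\ell}=\legendre{-n}{\ell}=e$, since $\legendre{Q}{\ell}=\legendre{-1}{\ell}$; the same holds for the $n/Q$ term. So in integral weight, the class on which the Sturm check must be run and the class in which $a_\alpha(Qn)\equiv0\pmod{\ell^j}$ follows are the same set of $n$, cut out by $\legendre{-n}{\ell}$.

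For $\ell\equiv1\pmod4$ this agrees with the class the algorithm tests, and your corrected argument goes through. This covers $\ell=5,13$ in Theorem~\ref{thm:colored-congruences}. For $\ell\equiv3\pmod4$ and even $\alpha$, no use of $Q\equiv-1\pmod\ell$ reconciles the two classes. You would have to go back to the exact class tested in \cite[Theorem~4.3]{RSST21}.

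Relatedly, your claim that $\chi_\alpha(Q)$ is absorbed because $Q\equiv-1\pmod{N_\alpha}$ is not quite right. In weight $\kappa/2$ the Hecke coefficient is $\chi_\alpha(Q)Q^{\kappa/2-1}$, with $\chi_\alpha(Q)=\chi_\alpha(-1)=(-1)^{\alpha/2}$. It agrees with the $Q^{\kappa-1}$ in $\mathcal H_Q$ only modulo $\ell^j$, where both are $\equiv-1$. That uses $Q\equiv-1\pmod{\ell^j}$ and the evenness of $k_\ell/2$, not $Q\equiv-1\pmod{N_\alpha}$ alone.
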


\begin{remark}
The existence results in \cite[Proposition~1.5]{Tre08} imply that, under the hypotheses above, a positive proportion of the relevant candidate primes \(Q\) satisfy the required congruences.
\end{remark}
\begin{remark}
    An output of \texttt{False} means only that the sufficient finite criterion in Algorithm~\ref{alg:interesting} did not certify \(Q\); it does not by itself prove that \(Q\) is not interesting.
\end{remark}

To keep these computations tractable, we select the colors $\alpha$ according to the value of $\gcd(\alpha,24)$. A larger common divisor generally yields smaller values of $t$ and $N_\alpha$, thereby reducing both the level $N_\alpha\ell^2$ of the modular forms in the finite verification criterion and the modulus $N_\alpha\ell^j$ defining the candidate primes $Q$. It also generally results in smaller Sturm bounds. This choice is purely computational and does not impose any additional restriction on the underlying congruence theorem; rather, it allows us to examine more parameter combinations with the available computational resources.

To state our results concisely, we introduce the notation
\[
\mathcal{P}_{m}(B)
:=
\left\{
Q\leq B:
Q\text{ is prime and }Q\equiv-1\pmod m
\right\},
\]
for positive integers $m$ and $B$. In our application, $m=N_\alpha\ell^j$. The following theorem records the non-elementary congruences certified by our implementation. The ordered pairs have the form \((\alpha,\varepsilon_\ell)\), and the colors are grouped according to their common value of \(N_\alpha\). Cases that recover known results or follow directly from standard product identities are discussed afterward and included in the accompanying computational data.

\begin{theorem}
\label{thm:colored-congruences}
Algorithm~\ref{alg:interesting} returns \texttt{True} in each of the following selected cases.

\begin{enumerate}

\item For \((\ell,j)=(5,1)\):
\begin{itemize}
\item $(\alpha,\varepsilon_5)
\in\{(6,1),(18,-1)\},
\qquad
Q\in\mathcal{P}_{80}(2000);$

\item$(\alpha,\varepsilon_5)
\in\{(8,1),(16,-1)\},
\qquad
Q\in\mathcal{P}_{180}(2000);$

\item$(\alpha,\varepsilon_5)=(3,1),
\qquad
Q=1279.$
\end{itemize}

\item For \((\ell,j)=(13,1)\):
\begin{itemize}
\item$(\alpha,\varepsilon_{13})
\in\{(6,1),(18,1)\},
\qquad
Q=1871.$
\end{itemize}

\item For \((\ell,j)=(5,2)\):
\begin{itemize}
\item$(\alpha,\varepsilon_5)=(12,1),
\qquad
Q\in\{599,1499\};$

\item$(\alpha,\varepsilon_5)
\in\{(6,1),(18,-1),(30,1),(42,-1),(54,1)\},
\qquad
Q\in\mathcal{P}_{400}(5000);$

\item$(\alpha,\varepsilon_5)
\in\{(8,1),(16,-1)\},
\qquad
Q=2699.$
\end{itemize}

\end{enumerate}

Consequently, for every parameter combination listed above and every corresponding prime \(Q\), one has
\[
p_{\alpha}\left(
\frac{Q^{r_{\alpha}}n+d}{t}
\right)
\equiv 0\pmod{\ell^j}
\]
for every integer \(n\geq1\) satisfying
\[
\gcd(n,\ell Q)=1
\qquad\text{and}\qquad
\legendre{-n}{\ell}\neq\varepsilon_\ell.
\]
\end{theorem}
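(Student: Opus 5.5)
The theorem is a computational certification, so the proof is really the specification of a finite computation, followed by an appeal to Proposition~\ref{prop:interesting}. I would organize it as follows.

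\textbf{Step 1: verify the hypotheses of Proposition~\ref{prop:interesting}.} For each listed \(\alpha\), compute \(t=24/\gcd(\alpha,24)\), \(d\), \(N_\alpha=\operatorname{lcm}(4,t^2)\) and \(\chi_\alpha\). For example, \(\alpha=6,18\) give \(t=4\) and \(N_\alpha=16\). The values \(\alpha=8,16\) give \(t=3\) and \(N_\alpha=36\). The value \(\alpha=12\) gives \(t=2\) and \(N_\alpha=4\). Then:
\begin{itemize}
\item check \(\gcd(\ell,N_\alpha)=1\) for \(\ell=5,13\);
\item check that each listed \(Q\) is prime with \(Q\equiv-1\pmod{N_\alpha\ell^j}\) (this explains the moduli \(80=16\cdot5\), \(180=36\cdot5\), \(400=16\cdot25\), and, for instance, \(1871\equiv-1\pmod{16\cdot13}\));
\item verify Condition~C at \(\ell\) with the stated \(\varepsilon_\ell\), by running \cite[Algorithm~1]{RSST21} on the eta-quotient \(\eta(tz)^{-\alpha}\).
\end{itemize}

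\textbf{Step 2: compute the parameters of Algorithm~\ref{alg:interesting}.} For each \((\alpha,\ell,j)\), compute:
\begin{itemize}
\item \(v_0\), the minimal cusp order of \(\eta(tz)^{-\alpha}\) on \(\Gamma_0(N_\alpha)\), via Ligozat's formula;
\item \(\beta\), by running over the cusps \(a/c\) of \(\Gamma_0(N_\alpha\ell^2)\) with \(\ell^2\nmid c\) and using \eqref{eqn:vanishing_Fp};
\item \(\kappa=-\alpha+\ell^\beta k_\ell\);
\item the Sturm bound \(n_0\) from \cite[Proposition~4.1]{RSST21}.
\end{itemize}

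\textbf{Step 3: run the coefficient check.} For each \(n\le n_0\) with \(\ell\nmid n\) and \(\legendre{n}{\ell}\neq\varepsilon_\ell\), evaluate \(\mathcal H_Q(n)\) modulo \(\ell^j\). This requires the exact integers \(a_\alpha(Q^2n)\) (for odd \(\alpha\)) or \(a_\alpha(Qn)\) (for even \(\alpha\)), that is, values \(p_\alpha(m)\) with \(m\) of size roughly \(Q^{r_\alpha}n_0/t\).

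For moderate arguments, the recurrence of Proposition~\ref{prop:t-colored-sigma-recurrence} suffices, reduced modulo \(\ell^j\). For large arguments, I would compute \(p_\alpha(m)\) exactly with Algorithm~\ref{alg:colored-partitions}; its correctness is guaranteed by Proposition~\ref{prop:numerical-error} together with the certified ball arithmetic. I would also cross-check overlapping ranges between the two methods.

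Once every admissible \(n\) passes, Algorithm~\ref{alg:interesting} returns \texttt{True}. Proposition~\ref{prop:interesting} then yields the stated congruence \(p_\alpha\bigl((Q^{r_\alpha}n+d)/t\bigr)\equiv0\pmod{\ell^j}\).

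\textbf{Main obstacle.} The hard part is the size of Step 3, not any conceptual issue. The case \((\alpha,\varepsilon_5)=(3,1)\) with \(Q=1279\) has \(r_\alpha=3\), so the arguments are on the order of \(Q^3 n_0\approx 2\cdot10^{9}n_0\). These are exactly the values for which the recurrence is infeasible and the Rademacher expansion with the prime-power formulas of Theorem~\ref{prop:Ak-prime-power} is essential. I would address this in three ways:
\begin{itemize}
\item skip \(n\) excluded by the conditions \(\ell\nmid n\) and \(\legendre{n}{\ell}\neq\varepsilon_\ell\);
\item reuse the \(n\)-independent data (\(\delta\), \(K_m\), and the factorizations of \(k\)) across all \(n\);
\item parallelize over \(n\).
\end{itemize}
The proof then consists of recording that these finite checks were carried out, with the data in the accompanying repository \cite{mycode}.
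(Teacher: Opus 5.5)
Your proposal follows the paper's route: verify the hypotheses of Proposition~\ref{prop:interesting}, run the finite check of Algorithm~\ref{alg:interesting} with coefficients computed exactly by Algorithm~\ref{alg:colored-partitions} (and by the recurrence for small arguments), and then conclude from Proposition~\ref{prop:interesting}. One correction to your size estimate: for odd \(\alpha\) the check only needs \(a_\alpha(Q^2n)=p_\alpha\bigl((Q^2n+d)/t\bigr)\), so the arguments are about \(Q^2n_0/t\) (roughly \(2\cdot10^5 n_0\) for \(\alpha=3\), \(Q=1279\)), not \(Q^{r_\alpha}n_0/t\) or \(Q^3n_0\); the exponent \(r_\alpha=3\) enters only in the conclusion, after \(n\) is replaced by \(Qn\).
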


The complete search produced additional congruences that are not displayed in Theorem~\ref{thm:colored-congruences}. In particular, our computations recover the \(24\)-colored congruences obtained by Ryan, Scherr, Sirolli, and Treneer~\cite[Proposition~5.3]{RSST21} and extend their catalogue of explicit examples to additional colors and prime-power moduli. Other outputs follow from elementary reductions of the colored-partition generating function, using the freshman's dream congruence together with Euler's pentagonal-number theorem and Jacobi's identity. These include all the cases with \(\ell=3\), the successful computations with \(\ell=7\), and several cases modulo \(5\) and \(13\). The \(24\)-colored congruence modulo \(25\) also follows from the identity recorded by Garvan~\cite[(2.3)]{MR1055413}.

To keep the statement focused, we have included only those congruences that, to the best of the author's knowledge, are not immediate consequences of these identities and have not previously appeared in the literature. The complete computational output, including all recovered and elementary cases, as well as every candidate for which Algorithm~\ref{alg:interesting} returned \texttt{True} or \texttt{False}, is available with the SageMath implementation at \cite{mycode}.

\subsection{A further question}

The multiplicativity and prime-power reduction arguments developed here
rely on the integrality of \(\alpha\), both when exponentiating
congruences for Dedekind sums and when identifying the resulting
multiplier factors with Dirichlet characters. It is therefore natural
to ask whether modified versions of these formulas can be obtained for
non-integral \(\alpha\). Such formulas could make the
Iskander--Jain--Talvola expansion computationally effective for
\(p_\alpha(n)\) throughout the full range of real parameters
\(\alpha>0\).

\nocite{MR434929,MR1027834}
\bibliography{colorbiblio.bib}
\bibliographystyle{alphaurl}

\end{document}